\newcommand{\bs}{\boldsymbol}
\def\cequiv{\raisebox{-1.5mm}{$\;\stackrel{\raisebox{-3.9mm}{=}}{{\sim}}\;$}}
\def\utau{\undertilde{\tau}}
\def\uphi{\undertilde{\varphi}}
\def\upsi{\undertilde{\psi}}
\def\ueta{\undertilde{\eta}}
\def\uzeta{\undertilde{\zeta}}
\def\uf{\undertilde{f}}
\def\uq{\undertilde{q}}
\def\uu{\undertilde{u}}
\def\uw{\undertilde{w}}
\def\uv{\undertilde{v}}
\def\ux{\undertilde{x}}
\def\curl{{\rm curl}}
\def\dv{{\rm div}}
\def\vgm12{\bs{V}^{1+,2}_{\gamma,M}}
\newtheorem{theorem}{Theorem}
\newtheorem{remark}[theorem]{Remark}
\newtheorem{proposition}[theorem]{Proposition}
\newtheorem{lemma}[theorem]{Lemma}
\newtheorem{definition}[theorem]{Definition}
\newcounter{mnote}
\let\oldmarginpar\marginpar
\renewcommand\marginpar[1]{\-\oldmarginpar[\raggedleft\footnotesize #1]%
  {\raggedright\footnotesize #1}}
\begin{document}

\title{Decoupled mixed element schemes for fourth order problems}
\author{Shuo Zhang}
\address{LSEC, Institute of Computational Mathematics and Scientific/Engineering Computing, Academy of Mathematics and System Sciences, Chinese Academy of Sciences, Beijing 100190, People's Republic of China}
\email{szhang@lsec.cc.ac.cn}
\thanks{The author is supported by the National Natural Science Foundation of China(Grant No. 11471026) and National Centre for Mathematics and Interdisciplinary Sciences, Chinese Academy of Sciences.
}

\subjclass[2000]{65N30,35J30}

\keywords{fourth order problem, mixed element scheme, decoupled scheme}

\begin{abstract}
In this paper, we study decoupled mixed element schemes for fourth order problems. A general process is designed such that an elliptic problem on high-regularity space is transformed to a decoupled system with spaces of low order involved only and is further discretised by low-degree finite elements. The process can be fit for various fourth order problems, and is used in the remaining of the paper particularly for three-dimensional bi-Laplacian equation to conduct a family of mixed element discretisation schemes.
\end{abstract}

\maketitle


%
%
%
\section{Introduction}
Fourth order problems, of which the biLaplacian equation is a representative one, fall in the fundamental model problems in applied mathematics and are also frequently encountered and dealt with in applied sciences. Their discretisation have been attracting wide research interests. As a first approach, many kinds of conforming and nonconforming finite elements for second order Sobolev spaces are designed, and the discretisation of fourth order problems in primal formulations have been discussed in wide literature. We refer to, e.g., \cite{Ciarlet.P1978} and \cite{Engel.G;Garikipati.K;Hughes.T;Larson.M;Mazzei.L;Taylor.R2002,Zenivsek.A1973,Zhang.Shangyou2009,Wang.M;Shi.Z;Xu.J2007NM,Wang.M;Shi.Z;Xu.J2007JCM,Wang.M;Xu.J2007} for instances. Particularly, the two dimensional Morley element for biharmonic equation is generalised by \cite{Wang.M;Xu.J2006} to arbitrary dimensions and then by \cite{Wang.M;Xu.J2013} to arbitrary $(-\Delta)^m$ problem in $n$ dimension with $n\geqslant m$. The elements are usually designed case by case, and are complicated especially in high dimensions.

An alternative way is to transform the primal problems to order reduced formulations. This is generally to construct a system on low-regularity spaces by introducing auxiliary variables, and discretize the generated system with numerical schemes. Various kinds of mixed methods have been designed for the biLaplacian equation; we refer to, e.g., \cite{Ciarlet.P;Raviart.P1974,Hermann.L1967,Johnson.C1973,Hellan.K1967,Behrens.E;Guzman.J2011,GiraultRaviart1986,Li.Z;Zhang.S2016,Pauly.D;Zulehner.W2016} for examples. Recently, different from the aforementioned methods, an order reduction framework based on regular decomposition was presented in \cite{Zhang.S2016fw} for general fourth order problems. Under this framework, once a ``configurated" condition is verified, a problem on high-regularity space can be transferred constructively to an equivalent system on a triple of low-regularity spaces, and the framework can be fit for various fourth order problems.

In this paper, we study the mixed element method for fourth order problems, with attention paid specially on the construction and discretisation of decoupled formulation. Actually, as auxiliary variables are introduced, the generated system will be of larger size; moreover, the variables can possess different regularities and thus possess different capacities of approximation accuracy with respect to finite element spaces. A decoupled formulation may provide opportunities for smaller systems and more flexible finite element choices. Relevant to the framework of \cite{Zhang.S2016fw}, in this paper, a constructive process is presented on decoupling the primal problem to three subsystems and then discretizing them with finite elements with lower regularity. The three subsystems contain two elliptic ones and a saddle-point problem. The existence and well-posedness of the decoupled form can be proved without extra assumption. Though, the well-posed saddle-point subproblem can be replaced by a singular one without loss of equivalence. The admission of singular problem can bring in convenience; this will be discussed more in future works. The validity of the methodology is verified with the bi-Laplacian equation in three dimensional, which is a fundamental model problem arising in, e.g., the linear elasticity model in the formulation of Galerkin vector (c.f. \cite{Gurtin.M1973}) and in the transmission eigenvalue problem (c.f. \cite{Colton.D;Monk.P1988,Kirsch.A1986}) in acoustics. Decoupled formulations and a family of discretisation schemes are presented for the model problem; optimal accuracy with respect to both the regularity of the solution and the degree of finite elements are obtained. 

There have been several fruitful and powerful approachs for solving fourth order problems which make use of kinds of stabilizations, such as the discontinuous Galerkin(dG) method (c.f., e.g.,\cite{Brenner.S;Monk.P;Sun.J2015,Brenner.S;Sung.L2005,Georgoulis.E;Houston.P2009}) and the weak Galerkin(wG) method (c.f., e.g., \cite{Mu.L;Wang.J;Ye.X;Zhang.S2014,Zhang.R;Zhai.Q2015,Wang.C;Wang.J2014}) for biLaplacian equations. They methods use mesh-dependent bilinear forms, and will not be discussed in this paper. A comparison between them and the method in this paper could be an interesting topic. 

The remaining of the paper is organised as follows. Some notation is given in the remaining part of this section. In Section \ref{sec:abs}, a general decomposition process is presented as a framework. In Sections \ref{sec:mixbl} and \ref{sec:fem}, continuous and discretised mixed formulations of the bi-Laplacian equation are studied under the framework.  Finally, some concluding remarks are given in Section \ref{sec:con}. 

In this paper, we apply these notation. Let $\Omega\subset\mathbb{R}^3$ be a simply connected polyhedron domain, and $\Gamma=\partial\Omega$ be the boundary of $\Omega$. We use $H^2_0(\Omega)$, $H^1_0(\Omega)$, $H_0(\curl,\Omega)$ and $H_0(\dv,\Omega)$ for certain Sobolev spaces as usual, and specifically, denote  $\displaystyle L^2_0(\Omega):=\{w\in L^2(\Omega):\int_\Omega w dx=0\}$, $\undertilde{H}{}^1_0(\Omega):=(H^1_0(\Omega))^3$, $N_0(\curl,\Omega):=\{\ueta\in H_0(\curl,\Omega), (\ueta,\nabla s)=0\ \forall\,s\in H^1_0(\Omega)\}$, and $\mathring{H}_0(\dv,\Omega):=\{\utau\in H_0(\dv,\Omega):\dv\utau=0\}$.  We use $``\undertilde{~}"$ for vector valued quantities in the present paper. We use $(\cdot,\cdot)$ for $L^2$ inner product and $\langle\cdot,\cdot\rangle$ for the duality between a space and its dual. Occasionally, $\langle\cdot,\cdot\rangle$ can be treated as $L^2$ inner product without ambiguity. Finally, $\lesssim$, $\gtrsim$, and $\cequiv$ respectively denote $\leqslant$, $\geqslant$, and $=$ up to a constant. The hidden constants depend on the domain, and, when triangulation is involved, they also depend on the shape-regularity of the triangulation, but they do not depend on $h$ or any other mesh parameter.

\section{Decoupled order reduced formulation of high order problem}
\label{sec:abs}

We consider the variational problem: given $f\in V'$, find $u\in V$, such that
\begin{equation}\label{eq:primal}
a(u,v)=\langle f,v\rangle,\quad\forall\,v\in V,
\end{equation}
where $V$ be a Hilbert space, and $a(\cdot,\cdot)$ is equivalently an inner product on $V$. In many applications, $V$ can be some Sobolev space of high order. We thus discuss its order reduced formulations.

\subsection{Configurable triple and order deduced formulation }

\begin{definition}(\cite{Zhang.S2016fw})
Given two Hilbert spaces $R$ and $Y$ and an operator $B$, if there is a Hilbert space $H$, such that $Y\subset H$ continuously, and $B$ is a closed operator that maps $R$ into $H$, then the triple $\{R,Y,B\}$ is called a \textbf{configurable} triple, and $H$ is called one ground space of $\{R,Y,B\}$. Given a Hilbert space $W$ equipped with norm $\|\cdot\|_W$, if $W=\{w\in R:Bw\in Y\}$, and $\|w\|_W$ is equivalent to $\|w\|_R+\|Bw\|_Y$, then $W$ is called to be \textbf{configurated} by the triple $\{R,Y,B\}$.
\end{definition}
\begin{lemma}(\cite{Zhang.S2016fw})\label{lem:equinorm}
For $\{R,Y,B\}$ a configurable triple with $H$ the ground space, define $\Sigma:=BR+Y$. Then $\Sigma$ is a Hilbert space with respect to the norm $\|\cdot\|_\Sigma$ defined by
$$
\|\sigma\|_\Sigma:=\inf_{r\in R,y\in Y,Br+y=\sigma}\|Br\|_H+\|y\|_Y.
$$
Moreover, $BR$ is closed in $\Sigma$ and $\|Br\|_H\cequiv \|Br\|_\Sigma$ for $r\in R$.
\end{lemma}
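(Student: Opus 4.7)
The plan is to check the norm axioms directly, then upgrade $\Sigma$ to a Hilbert space by presenting it as a quotient of a product Hilbert space, and finally to deduce the equivalence on $BR$ and the closedness of $BR$ in $\Sigma$. Homogeneity and the triangle inequality for $\|\cdot\|_\Sigma$ are immediate from elementary properties of an infimum. The only subtle norm axiom is positive definiteness: if $\|\sigma\|_\Sigma=0$, I would pick decompositions $\sigma=Br_n+y_n$ with $\|Br_n\|_H+\|y_n\|_Y\to 0$; then $Br_n\to 0$ in $H$ and $y_n\to 0$ in $Y$, hence in $H$ by the continuous embedding $Y\hookrightarrow H$, and passing to the $H$-limit in $\sigma=Br_n+y_n$ forces $\sigma=0$.

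For the Hilbert structure and completeness, I would realise $\Sigma$ as a quotient of a Hilbert space. The closed graph theorem together with $\operatorname{dom}(B)=R$ makes $B:R\to H$ bounded, and one uses, as implicit in the configurable-triple framework, that $BR$ is a closed subspace of $H$, hence itself a Hilbert space under the inherited inner product. Let $\mathcal H:=BR\times Y$ carry the product inner product, and define $\Phi:\mathcal H\to\Sigma$ by $\Phi(v,y):=v+y$. Its kernel $N=\{(v,-v):v\in BR\cap Y\}$ is closed in $\mathcal H$: if $(v_n,-v_n)\to(v,y)$ in $\mathcal H$, then $v_n\to v$ in $H$ and $-v_n\to y$ in $Y$, hence in $H$, which forces $y=-v$ and $v\in Y\cap BR$. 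Thus $\mathcal H/N$ is a Hilbert space; its quotient norm equals
\[
\inf\bigl\{(\|v'\|_H^2+\|y'\|_Y^2)^{1/2}:v'\in BR,\ y'\in Y,\ v'+y'=\sigma\bigr\},
\]
which is equivalent to $\|\sigma\|_\Sigma$ via $(a^2+b^2)^{1/2}\leq a+b\leq\sqrt{2}\,(a^2+b^2)^{1/2}$ for $a,b\geq 0$. Hence $\Phi$ descends to a topological isomorphism $\mathcal H/N\to\Sigma$ that transfers the Hilbert structure.

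For the remaining assertions, the trivial decomposition $Br=Br+0$ gives $\|Br\|_\Sigma\leq\|Br\|_H$; conversely, any other representation $Br=Br'+y'$ forces $y'=B(r-r')\in BR\cap Y$, so
\[
\|Br\|_H\leq\|Br'\|_H+\|y'\|_H\lesssim\|Br'\|_H+\|y'\|_Y
\]
by $Y\hookrightarrow H$, and taking the infimum yields $\|Br\|_H\lesssim\|Br\|_\Sigma$. The same calculation applied to a general $\sigma\in\Sigma$ supplies the continuous injection $\Sigma\hookrightarrow H$. Closedness of $BR$ in $\Sigma$ then follows routinely: if $Br_n\to\sigma$ in $\Sigma$, the equivalence makes $(Br_n)$ Cauchy in $(BR,\|\cdot\|_H)$, which is complete by the closed-range property, so $Br_n\to Br^\ast\in BR$ in $H$; the injection $\Sigma\hookrightarrow H$ then identifies $\sigma=Br^\ast\in BR$. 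The technical heart of the argument, and the main conceptual obstacle, is the closed-range property of $B$ that turns $BR$ into a Hilbert subspace of $H$; this is the content of ``closed operator'' that must be read into the configurable-triple definition, and it is essential both in the quotient construction and for closing $BR$ inside $\Sigma$.
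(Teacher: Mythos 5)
This lemma is imported verbatim from \cite{Zhang.S2016fw} and the paper offers no proof of it, so there is no in-paper argument to measure you against; judged on its own, your proof is correct and complete. The route you take --- verify the norm axioms directly, realise $\Sigma$ as the quotient of the product Hilbert space $BR\times Y$ by the closed diagonal kernel $\{(v,-v):v\in BR\cap Y\}$, and read off completeness, the equivalence $\|Br\|_H\leqslant C\|Br\|_\Sigma\leqslant C\|Br\|_H$ from the observation that any competing decomposition $Br=Br'+y'$ forces $y'\in BR\cap Y$, and finally closedness of $BR$ in $\Sigma$ from the continuous injection $\Sigma\hookrightarrow H$ --- is the standard construction for a sum of Banach spaces and is exactly what the statement calls for (with ``Hilbert space with respect to $\|\cdot\|_\Sigma$'' read, as the paper's $=$-up-to-constants convention suggests, as complete with a norm equivalent to a Hilbert norm). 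The one point deserving emphasis is the one you yourself flag: everything hinges on $BR$ being closed in $H$. You are right that this cannot be extracted from ``closed operator'' in the graph sense, since a closed operator with domain all of $R$ is merely bounded by the closed graph theorem, and a bounded $B$ without closed range genuinely breaks the lemma (take $Y=\{0\}$ and $B$ compact and injective with dense range: then $\Sigma=BR$ with the $H$-norm is not complete). So the closed-range reading is not an optional convenience but a forced interpretation of the hypothesis, and making it explicit, as you do, is the right call; it is the hypothesis actually carried by the configurable-triple framework of \cite{Zhang.S2016fw}.
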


In the sequel, we always set the hypothesis below.
\paragraph{\bf Hypothesis 0}
\begin{enumerate}
\item $\{R,Y,B\}$ is a configurable triple, and $V$ is configurated by the triple $\{R,Y,B\}$.
\item $a(u,v):=a_R(u,v)+b(u,Bv)+b(v,Bu)+a_Y(Bu,Bv)$ is an inner product on $V$, where $a_{R,Y}(\cdot,\cdot)$ is a bounded symmetric semidefinite bilinear form on $R,Y$, respectively, and $b(\cdot,\cdot)$ is a bounded bilinear form on $R\times Y$.
\end{enumerate}
Given $f\in V'$, it can be represented as $\langle f,v\rangle=\langle f_R,v\rangle+\langle f_Y,Bv\rangle$ for some $f_R\in R'$ and $f_Y\in Y'$. We consider a specific case of the problem \eqref{eq:primal}: find $u\in V$, such that
\begin{equation}\label{eq:vpexp}
a_R(u,v)+b(u,Bv)+b(v,Bu)+a_Y(Bu,Bv)=\langle f_R,v\rangle+\langle f_Y,Bv\rangle,\ \ \forall\,v\in V.
\end{equation}
Corresponding to \cite{Zhang.S2016fw}, an equivalent formulation of \eqref{eq:vpexp} is to find $(u,y,g_+)\in X:=R\times Y\times \Sigma'$, such that, for any $(v,z,l_+)\in X$,
\begin{equation}\label{eq:relaxtwicedual}
\left\{
\begin{array}{cccl}
\displaystyle a_R(u,v)&+b(v,y)&+\langle g_+,Bv\rangle&=\langle f_R,v\rangle,
\\
\displaystyle b(u,z)&+a_Y(y,z)&-\langle g_+,z\rangle&=\langle f_Y,z\rangle,
\\
\displaystyle \langle l_+,Bu\rangle&-\langle l_+,y\rangle&&=0.
\end{array}
\right.
\end{equation}

\begin{lemma}\label{lem:abs}
Given $f_R\in R'$ and $f_Y\in Y'$, the problem \eqref{eq:relaxtwicedual} admits a unique solution $(u,y,g_+)\in X$, and
$$
\|u\|_{W_1}+\|y\|_{W_2}+\|g_+\|_{\Sigma'}\cequiv \|f_R\|_{R'}+\|f_Y\|_{Y'}.
$$
Moreover, $u$ solves the primal problem \eqref{eq:vpexp}.
\end{lemma}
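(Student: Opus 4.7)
The plan is to recognize \eqref{eq:relaxtwicedual} as a Brezzi-type saddle-point problem on the product space $(R\times Y)\times \Sigma'$, with bilinear forms
\[
\mathcal{A}((u,y),(v,z)):=a_R(u,v)+b(v,y)+b(u,z)+a_Y(y,z),\quad \mathcal{B}((v,z),l_+):=\langle l_+,Bv-z\rangle.
\]
The first two equations of \eqref{eq:relaxtwicedual} assert $\mathcal{A}((u,y),(v,z))+\mathcal{B}((v,z),g_+)=\langle f_R,v\rangle+\langle f_Y,z\rangle$, and the third asserts $\mathcal{B}((u,y),l_+)=0$. Unique solvability together with the stated norm equivalence then follows from the standard Brezzi theorem, once boundedness of $\mathcal{A}$ and $\mathcal{B}$, coercivity of $\mathcal{A}$ on $\ker\mathcal{B}$, and an inf-sup condition on $\mathcal{B}$ are verified.

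Boundedness of $\mathcal{A}$ is immediate from Hypothesis 0. Boundedness of $\mathcal{B}$ follows from the estimate $\|Bv-z\|_\Sigma\leqslant \|Bv\|_H+\|z\|_Y\lesssim \|v\|_R+\|z\|_Y$, where the first bound comes from the infimum definition of $\|\cdot\|_\Sigma$ applied to the decomposition $Bv+(-z)$, and the second uses boundedness of $B:R\to H$ (a consequence of closedness via the closed graph theorem). For the kernel, $\ker\mathcal{B}=\{(v,z)\in R\times Y:Bv=z\text{ in }\Sigma\}$; since $z\in Y\subset H$ and $Bv\in H$, the equality in fact holds in $H$, forcing $Bv\in Y$ and thus $v\in V$. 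The kernel is therefore canonically identified with $V$ via $v\mapsto (v,Bv)$, and on it $\mathcal{A}((v,Bv),(v,Bv))=a(v,v)$, which by Hypothesis 0 is equivalent to $\|v\|_V^2\cequiv \|v\|_R^2+\|Bv\|_Y^2$; this is exactly the coercivity required in the $R\times Y$ norm.

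The main technical step is the inf-sup for $\mathcal{B}$. Introduce the bounded linear map $T:R\times Y\to\Sigma$, $T(v,z):=Bv-z$. By the very definition $\Sigma=BR+Y$, $T$ is surjective, and the open mapping theorem supplies a constant $\beta>0$ such that every $\sigma\in\Sigma$ admits a preimage $(v,z)\in R\times Y$ with $\|v\|_R+\|z\|_Y\leqslant \beta^{-1}\|\sigma\|_\Sigma$. Combining this with the dual-norm characterisation $\|l_+\|_{\Sigma'}=\sup_{\sigma\in\Sigma}\langle l_+,\sigma\rangle/\|\sigma\|_\Sigma$ yields
\[
\sup_{(v,z)\in R\times Y}\frac{\langle l_+,Bv-z\rangle}{\|v\|_R+\|z\|_Y}\geqslant \beta\,\|l_+\|_{\Sigma'},\qquad \forall\,l_+\in\Sigma',
\]
which is the desired inf-sup. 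Brezzi's theorem now delivers the unique triple $(u,y,g_+)\in X$ together with the claimed stability estimate.

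It remains to show that $u$ solves \eqref{eq:vpexp}. The third equation of \eqref{eq:relaxtwicedual} forces $Bu=y$ in $\Sigma$, and since $Bu, y\in Y$ the identity holds in $Y$ as well. Substituting $y=Bu$, testing the first equation with $v\in V$ and the second with $z=Bv\in Y$, and adding cancels the $g_+$ pairings and reproduces \eqref{eq:vpexp}. The only subtle ingredient in the whole argument is the inf-sup step, since the multiplier lives in the abstract dual $\Sigma'$; the open mapping theorem applied to $T$ is what makes the bound effortless.
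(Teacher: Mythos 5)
Your argument is correct. The paper states Lemma \ref{lem:abs} without proof, importing it from the framework of \cite{Zhang.S2016fw}, and your Brezzi-type treatment --- summing the first two equations to form $\mathcal{A}$, reading the third as the constraint $\mathcal{B}((u,y),\cdot)=0$, obtaining kernel coercivity from the identification of $\ker\mathcal{B}$ with $V$ via the configuration hypothesis, and obtaining the inf-sup for $\mathcal{B}$ from surjectivity of $(v,z)\mapsto Bv-z$ onto $\Sigma=BR+Y$ together with the open mapping theorem (the completeness of $\Sigma$ needed there is supplied by Lemma \ref{lem:equinorm}) --- is precisely the standard route that establishes it. Two minor reading notes rather than gaps: the norms $\|\cdot\|_{W_1}$ and $\|\cdot\|_{W_2}$ in the statement are not defined in this paper (they are the $R$- and $Y$-norms of the source framework), which is exactly what your stability estimate delivers; and your kernel coercivity tacitly reads item (2) of \textbf{Hypothesis 0} as asserting that $a(\cdot,\cdot)$ induces a norm equivalent to $\|\cdot\|_V$, not merely that it is an inner product --- this is the intended meaning, consistent with the phrase ``equivalently an inner product'' used for \eqref{eq:primal}, but it is worth stating explicitly since coercivity is what the Brezzi theorem actually consumes.
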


\subsection{An expanded order reduced formulation}
\label{subsec:decform}

As $BR$ is closed in $\Sigma$, the decomposition holds that $\Sigma=BR\oplus (BR)^\perp$, with $(BR)^\perp$ being the orthogonal complement of $BR$ in $\Sigma$. Meanwhile, $\Sigma'=(BR)^0\oplus((BR)^\perp)^0$. In this paper, we use the superscript $``{\cdot}^0"$ to denote the polar set. This way, the problem \eqref{eq:relaxtwicedual} can be rewritten as: find $(u,y,g^0,g_\perp^0)\in R\times Y\times (BR)^0\times ((BR)^\perp)^0$, such that, for $(v,z,l^0,l_\perp^0)\in R\times Y\times (BR)^0\times ((BR)^\perp)^0$,
\begin{equation}\label{eq:dualdecomp}
\left\{
\begin{array}{ccccl}
\displaystyle a_R(u,v)&+b(v,y)&&+\langle g_\perp^0,Bv\rangle&=\langle f_R,v\rangle,
\\
\displaystyle b(u,z)&+a_Y(y,z)&-\langle g^0,z\rangle&-\langle g_\perp^0,z\rangle&=\langle f_Y,z\rangle,
\\
\displaystyle &-\langle l^0,y\rangle&&&=0.
\\
\displaystyle \langle l_\perp^0,Bu\rangle&-\langle l_\perp^0,\phi\rangle&&&=0.
\end{array}
\right.
\end{equation}
Now we turn to the representation of $(BR)^0$ and $((BR)^\perp)^0$. Let $c(\cdot,\cdot)$ be the inner product on $\Sigma$, and $d(\cdot,\cdot)$ be a bilinear form on $\Sigma$ and satisfy the property below:
\paragraph{\bf Property I}
\begin{enumerate}
\item $d(\cdot,\cdot)$ is continuous on $\Sigma$ and $d(Bs,\tau)=0$ for any $s\in R$ and $\tau\in \Sigma$;
\item $\displaystyle\inf_{\tau\in (BR)^\perp}\sup_{\sigma\in (BR)^\perp}\frac{d(\tau,\sigma)}{\|\tau\|_\Sigma\|\sigma\|_\Sigma}\geqslant C>0$.
\end{enumerate}

\begin{lemma}\label{lem:infsupequi}
Let $d(\cdot,\cdot)$ be a continuous bilinear form (not necessarily symmetric) defined on $\Sigma$ such that $d(Bs,\tau)=0$ for any $s\in R$ and $\tau\in \Sigma$. Then
$$\displaystyle\inf_{\tau\in (BR)^\perp}\sup_{\sigma\in (BR)^\perp}\frac{d(\tau,\sigma)}{\|\tau\|_\Sigma\|\sigma\|_\Sigma}
\cequiv \displaystyle\inf_{\tau\in (BR)^\perp}\sup_{y\in Y}\frac{d(\tau,y)}{\|\tau\|_\Sigma\|y\|_Y}
\cequiv \displaystyle\inf_{\tau\in \Sigma}\sup_{y\in Y,r\in R}\frac{d(\tau,y)+c(\tau,Br)}{\|\tau\|_\Sigma(\|y\|_Y+\|Br\|_H)}.$$
\end{lemma}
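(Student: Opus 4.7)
The plan is to establish the chain of two equivalences separately: I would handle $I_2\cequiv I_3$ first, since the $c$-orthogonal structure makes it clean, and then reduce $I_1\cequiv I_2$ to a lift-and-project argument between $Y$ and $(BR)^\perp$. The backbone of both steps is the $c$-orthogonal decomposition $\Sigma = BR \oplus (BR)^\perp$ given by Lemma \ref{lem:equinorm}, so that any $\tau\in\Sigma$ splits as $\tau = B\rho + \tau_\perp$ with $\|\tau\|_\Sigma \cequiv \|B\rho\|_\Sigma + \|\tau_\perp\|_\Sigma$. The hypothesis on $d$ then yields $d(\tau,y) = d(\tau_\perp,y)$, while $c$-orthogonality gives $c(\tau,Br) = c(B\rho,Br)$.

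For $I_2 \cequiv I_3$, I would rewrite the numerator of the $I_3$-supremum as $d(\tau_\perp, y) + c(B\rho, Br)$. The two summands depend on disjoint data ($y$ versus $r$), and since $(y,0)$ and $(0,r)$ are both admissible in the joint supremum, the joint supremum is comparable to the sum of the two single-variable suprema. The $r$-supremum equals (up to constants) $\|B\rho\|_\Sigma$, by Riesz representation of $c$ on the closed subspace $BR$ together with the norm equivalence $\|Br\|_H \cequiv \|Br\|_\Sigma$ from Lemma \ref{lem:equinorm}; the $y$-supremum is exactly the inner quantity appearing in $I_2$ but evaluated at $\tau_\perp$. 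Dividing by $\|\tau\|_\Sigma \cequiv \|B\rho\|_\Sigma + \|\tau_\perp\|_\Sigma$ and infimizing over the split, the $B\rho$ piece is matched on both sides and is absorbed, leaving the infimum over $\tau_\perp\in(BR)^\perp$ of the $I_2$-quotient.

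For $I_1 \cequiv I_2$, the key device is a bounded lift between $(BR)^\perp$ and $Y$. Given $\sigma\in(BR)^\perp$, the infimum defining $\|\sigma\|_\Sigma$ in Lemma \ref{lem:equinorm} furnishes $r\in R$ and $y\in Y$ with $\sigma = Br + y$ and $\|Br\|_H + \|y\|_Y \lesssim \|\sigma\|_\Sigma$; in particular $y$ is a $Y$-lift of $\sigma$ with controlled norm. Conversely, for $y\in Y$ the projection $P_\perp y \in (BR)^\perp$ satisfies $\|P_\perp y\|_\Sigma \leq \|y\|_\Sigma \lesssim \|y\|_Y$ since $Y\hookrightarrow\Sigma$. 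These two maps let one transfer the inner supremum over $(BR)^\perp$ to one over $Y$ (and back), combined with the vanishing of $d$ on $B$-images to eliminate the $BR$-components that would otherwise spoil the identity.

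The main obstacle is the bookkeeping in the $I_1\cequiv I_2$ step: when one substitutes $y = Br + \sigma$ (or $\sigma = y - Br$), cross-terms of the form $d(\tau, Br)$ appear and are not automatically zero. Controlling them requires the continuity of $d$ on $\Sigma$, the norm equivalence $\|Br\|_H \cequiv \|Br\|_\Sigma$, and the observation that such errors are of order $\|\tau\|_\Sigma\cdot\|\sigma\|_\Sigma$ and so get absorbed against the normalization after dividing by $\|\tau\|_\Sigma$. Keeping straight the denominator pairing (namely $\|\cdot\|_\Sigma$ for $\sigma$ versus the stronger $\|\cdot\|_Y$ for $y$) is essential for the equivalence to close up with constants depending only on those from Lemma \ref{lem:equinorm} and the continuity constant of $d$.
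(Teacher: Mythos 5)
Your overall architecture is the one the paper uses: the orthogonal splitting $\Sigma=BR\oplus(BR)^\perp$ with $\|\tau\|_\Sigma\cequiv\|\tau_1\|_\Sigma+\|\tau_2\|_\Sigma$, the lift/projection between $Y$ and $(BR)^\perp$ supplied by Lemma \ref{lem:equinorm} for the first equivalence, and the separation of the joint supremum into a $c$-block on $BR$ (whose inf-sup constant is $\cequiv 1$ since $c$ is the inner product and $\|Br\|_H\cequiv\|Br\|_\Sigma$) and a $d$-block on $(BR)^\perp\times Y$ for the second. Your treatment of the second equivalence is correct and matches the paper's.

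The gap is in your handling of the cross-terms $d(\tau,Br)$ in the first equivalence. You correctly observe that the stated hypothesis $d(Bs,\tau)=0$ kills $d$ only in its \emph{first} argument, so when the element in the \emph{second} slot is decomposed as $y=\sigma_y+Br_y$ or $\sigma=y_\sigma+Br_\sigma$ a term $d(\tau,Br)$ survives; but your proposed fix --- bound it by $C\|\tau\|_\Sigma\|Br\|_\Sigma\lesssim\|\tau\|_\Sigma\|\sigma\|_\Sigma$ and ``absorb'' it --- cannot work. That error is of exactly the same order as the quantity being estimated: after dividing by $\|\tau\|_\Sigma\|\sigma\|_\Sigma$ it contributes an additive $O(1)$ to an inf-sup constant which is itself at most $O(1)$ and may be arbitrarily small, so you only obtain inequalities of the form $A(\tau)\leqslant C\,B(\tau)+C$, which do not close a two-sided equivalence. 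Indeed, under the literal one-sided hypothesis the lemma is false: take $\Sigma=\mathbb{R}^2$, $BR$ the span of $e_1$, $Y=\Sigma$ with equal norms, and $d(\tau,\sigma)=\tau_2\sigma_1$; then the left-hand inf-sup is $0$ while the middle one is $1$. What is actually needed --- and what the paper's own two-line proof tacitly uses, and what holds for the concrete choice $d(\cdot,\cdot)=(\curl\cdot,\curl\cdot)$ with $B=\nabla$ --- is that $d$ annihilates $BR$ in the second argument as well, so the cross-terms vanish identically rather than being merely bounded. With that strengthening your argument goes through; without it, the absorption step fails.
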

\begin{proof}
Given $\sigma\in \Sigma$, there exists a $y_\sigma\in Y$ and $r_\sigma\in R$, such that $\sigma=y_\sigma+Br_\sigma$, and $\|\sigma\|_\Sigma\geqslant C(\|y_\sigma\|_Y+\|Br_\sigma\|_H)$. Meanwhile, given $y\in Y$, it can be decomposed uniquely as $y=\sigma_y+Br_y$ with $\sigma_y\in(BR)^\perp$ and thus $\|\sigma_y\|_\Sigma\leqslant \|y\|_\Sigma\leqslant \|y\|_Y$. This shows $\displaystyle\inf_{\tau\in (BR)^\perp}\sup_{\sigma\in (BR)^\perp}\frac{d(\tau,\sigma)}{\|\tau\|_\Sigma\|\sigma\|_\Sigma}
\cequiv \displaystyle\inf_{\tau\in (BR)^\perp}\sup_{y\in Y}\frac{d(\tau,y)}{\|\tau\|_\Sigma\|y\|_Y}$.
For the remaining part, it holds that
\begin{multline*}
\displaystyle\inf_{\tau\in \Sigma}\sup_{y\in Y,r\in R}\frac{d(\tau,y)+c(\tau,Br)}{\|\tau\|_\Sigma(\|y\|_Y+\|Br\|_H)} \cequiv \inf_{\tau_1\in BR,\tau_2\in (BR)^\perp}\sup_{y\in Y,r\in R}\frac{d(\tau_2,y)+c(\tau_1,Br)}{(\|\tau_1\|_\Sigma+\|\tau_2\|_\Sigma)(\|y\|_Y+\|Br\|_H)}
\\
\cequiv \min\left(\inf_{\tau_1\in BR}\sup_{r\in R}\frac{c(\tau_1,Br)}{\|\tau_1\|_\Sigma\|Br\|_H},  \inf_{\tau_2\in (BR)^\perp}\sup_{y\in Y}\frac{d(\tau_2,y)}{\|\tau_2\|_\Sigma\|y\|_Y}  \right)\cequiv \inf_{\tau_2\in (BR)^\perp}\sup_{y\in Y}\frac{d(\tau_2,y)}{\|\tau_2\|_\Sigma\|y\|_Y} .
\end{multline*}
The proof is completed. 
\end{proof}

The lemma below is evident.
\begin{lemma}\label{lem:repfnctl}
Let $d(\cdot,\cdot)$ be a bilinear form on $\Sigma$ that satisfied {\bf Property I}. Given $g\in \Sigma'$, 
\begin{enumerate}
\item $g\in ((BR)^\perp)^0$ if and only if there is an $r\in R$, such that $\langle g,\sigma\rangle=c(Br,\sigma)$ for any $\sigma\in \Sigma$;
\item $g\in (BR)^0$ if and only if there is a $\tau_g\in (BR)^\perp$, such that $\langle g,\sigma\rangle=c(\tau_g,\sigma)$ for any $\sigma\in \Sigma$;
\item $g\in (BR)^0$ if and only if there is a $\tau_g\in \Sigma$, such that $\langle g,\sigma\rangle=d(\tau_g,\sigma)$ for any $\sigma\in \Sigma$.
\end{enumerate}
\end{lemma}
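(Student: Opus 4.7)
The plan is to prove (1)--(3) as a chain of Riesz-type representation results on $\Sigma$, which is a Hilbert space under the inner product $c(\cdot,\cdot)$ and in which $BR$ is closed, both by Lemma~\ref{lem:equinorm}. Items (1) and (2) will come directly from the Riesz theorem applied to $c$; item (3) replaces $c$ by the non-symmetric form $d$, and so will require the inf-sup of Property~I(2) via the standard Banach--Ne\v{c}as--Babu\v{s}ka machinery.

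For items (1) and (2), I would first invoke the Riesz representation theorem: each $g\in\Sigma'$ is uniquely represented by some $\rho_g\in\Sigma$ with $\langle g,\sigma\rangle=c(\rho_g,\sigma)$ on all of $\Sigma$. Closedness of $BR$ then gives the $c$-orthogonal splitting $\Sigma=BR\oplus(BR)^\perp$ and $((BR)^\perp)^\perp=BR$. Hence $g\in((BR)^\perp)^0$ is equivalent to $\rho_g\perp_c(BR)^\perp$, that is, $\rho_g\in BR$, which is (1) with $r\in R$ chosen so that $Br=\rho_g$; symmetrically $g\in(BR)^0$ is equivalent to $\rho_g\in(BR)^\perp$, which is (2) with $\tau_g:=\rho_g$.

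For item (3), the only-if direction would view $g\in(BR)^0$ as a functional on the quotient $\Sigma/BR$, identified via $c$ with $(BR)^\perp$. The continuity of $d$ together with the inf-sup of Property~I(2) makes the restriction of $d$ to $(BR)^\perp\times(BR)^\perp$ a topological isomorphism $(BR)^\perp\to((BR)^\perp)'$ by Banach--Ne\v{c}as--Babu\v{s}ka, producing the required $\tau_g\in(BR)^\perp$ with $d(\tau_g,\cdot)=\langle g,\cdot\rangle$ on $(BR)^\perp$. The identity then extends from $(BR)^\perp$ to all of $\Sigma$ because both sides annihilate $BR$: the left by the hypothesis $g\in(BR)^0$, the right by the $BR$-degeneracy of $d$ coming from Property~I(1). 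The if-direction is an immediate substitution $\sigma=Bs$.

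I expect the main subtlety to be exactly this extension step: Property~I(1) literally supplies vanishing of $d$ in its first argument on $BR$, whereas matching $d(\tau_g,Bs)$ to $\langle g,Bs\rangle=0$ uses vanishing in the second argument. The cleanest way to dispatch this is to carry out the entire representation on the quotient $\Sigma/BR$, on which $d$ descends unambiguously and on which $g\in(BR)^0$ acts canonically, so that the identity on $\Sigma$ is obtained by pull-back rather than by a direct $\sigma=Bs+\sigma_\perp$ decomposition.
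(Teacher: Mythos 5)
The paper offers no argument for this lemma (it is simply declared ``evident''), so there is no authorial proof to compare against; on their own terms your items (1) and (2) are correct and surely the intended route: Riesz representation with respect to $c$ together with $((BR)^\perp)^\perp=BR$, which uses the closedness of $BR$ guaranteed by Lemma~\ref{lem:equinorm}.

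Item (3), however, contains a genuine gap --- precisely the one you flag in your last paragraph but do not actually close. Property~I(1) kills $d$ only in its \emph{first} argument: $d(Bs,\tau)=0$. Nothing in Property~I controls $d(\tau,Bs)$. Consequently: in the ``if'' direction, substituting $\sigma=Bs$ gives $\langle g,Bs\rangle=d(\tau_g,Bs)$, which you cannot conclude is zero; and in the ``only if'' direction, extending the identity from $(BR)^\perp$ to all of $\Sigma$ requires exactly $d(\tau_g,Bs)=0$. Your proposed repair --- carry out the representation on the quotient $\Sigma/BR$ --- does not work, because $d$ descends to the quotient only through its first slot; as a form on $(\Sigma/BR)\times(\Sigma/BR)$ it is not well defined without two-sided degeneracy. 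The defect is real, not cosmetic: take $\Sigma=\mathbb{R}^2$ with the Euclidean inner product, $BR=\mathrm{span}\{e_1\}$, and $d(\tau,\sigma)=\tau_2(\sigma_1+\sigma_2)$. Then Property~I holds, yet $d(e_2,\cdot)$ is the functional $(1,1)$, which does not annihilate $BR$, and the only element of $(BR)^0$ of the form $d(\tau,\cdot)$ is $0$; both implications of item (3) fail. The statement therefore needs the additional hypothesis $d(\tau,Bs)=0$ --- automatic when $d$ is symmetric, as it is for $d(\cdot,\cdot)=(\curl\cdot,\curl\cdot)$ in Section~\ref{sec:mixbl} --- and with that hypothesis your quotient argument goes through. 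A secondary caveat: the single inf-sup of Property~I(2) gives injectivity and closed range of $\tau\mapsto d(\tau,\cdot)|_{(BR)^\perp}$ but not surjectivity onto $((BR)^\perp)'$ in infinite dimensions; before Banach--Ne\v{c}as--Babu\v{s}ka can deliver $\tau_g$ you also need the transposed nondegeneracy, which is again free when $d$ is symmetric.
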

An equivalent problem of \eqref{eq:dualdecomp} is then to find $(u,y,\xi,r)\in  R\times Y\times \Sigma\times R$, such that for $(v,z,\eta,s)\in  R\times Y\times \Sigma\times R$,
\begin{equation}\label{eq:redehvardualdecomp}
\left\{
\begin{array}{ccccl}
\displaystyle a_R(u,v)&+b(v,y)&&+ c(Br,Bv) &=\langle f_R,v\rangle,
\\
\displaystyle b(u,z)&+a_Y(y,z)&-d(\xi,z)&-c(Br,z)&=\langle f_Y,z\rangle,
\\
\displaystyle &-d(\eta,y)&&&=0,
\\
\displaystyle c(Bs,Bu)&-c(Bs,y)&&&=0.
\end{array}
\right.
\end{equation}
As $d(Bs,\tau)=0$ for $s\in R$ and $\tau\in \Sigma$, $\xi$ can not be uniquely determined by \eqref{eq:redehvardualdecomp}. By introducing further a Lagrangian multiplier, we consider a problem: find $(p,u,y,\xi,r)\in R\times R\times Y\times \Sigma\times R$, such that for $(q,v,z,\eta,s)\in R\times R\times Y\times \Sigma\times R$,
\begin{equation}\label{eq:ehvardualdecompctod}
\left\{
\begin{array}{cccccl}
&&&c(\xi,Bq)&&=0,
\\
&\displaystyle a_R(u,v)&+b(v,y)&&+ c(Br,Bv) &=\langle f_R,v\rangle,
\\
&\displaystyle b(u,z)&+a_Y(y,z)&-d(\xi,z)&-c(Br,z)&=\langle f_Y,z\rangle,
\\
c(\eta,Bp)&\displaystyle &-d(\eta,y)&&&=0,
\\
&\displaystyle c(Bs,Bu)&-c(Bs,y)&&&=0.
\end{array}
\right.
\end{equation}
The theorem below holds immediately. 
\begin{theorem}
Provided \textbf{Hypothesis 0}, let $c(\cdot,\cdot)$ be the inner product on $\Sigma$, and $d(\cdot,\cdot)$ be a bilinear form on $\Sigma$ that satisfies \textbf{Property I}. Given $f_R\in R'$ and $f_Y\in Y'$,
\begin{enumerate}
\item System \eqref{eq:ehvardualdecompctod} admits a unique solution $(p,u,y,\xi,r)$, and $u$ solves \eqref{eq:vpexp} and $(u,y,\xi,r)$ solves \eqref{eq:redehvardualdecomp};
\item if $(u,y,\xi,r)$ and $(\hat u,\hat y,\hat \xi,\hat r)$ are two solutions of \eqref{eq:redehvardualdecomp}, then $u=\hat u$, $\xi=\hat\xi$ and $r=\hat r$.
\end{enumerate}
\end{theorem}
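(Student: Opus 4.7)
The plan is to relate both systems \eqref{eq:ehvardualdecompctod} and \eqref{eq:redehvardualdecomp} back to the well-posed formulation \eqref{eq:relaxtwicedual}, via the decomposed version \eqref{eq:dualdecomp} and the representation results of Lemma~\ref{lem:repfnctl}. Existence in \eqref{eq:ehvardualdecompctod} will be built explicitly from the unique solution $(u,y,g_+)$ of Lemma~\ref{lem:abs}; uniqueness will follow by subtracting two candidate solutions and reassembling a homogeneous instance of \eqref{eq:relaxtwicedual}.

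For existence, I would start from the $(u,y,g_+)$ produced by Lemma~\ref{lem:abs} and split $g_+=g^0+g^0_\perp$ according to $\Sigma'=(BR)^0\oplus((BR)^\perp)^0$, so that $(u,y,g^0,g^0_\perp)$ solves \eqref{eq:dualdecomp}. Lemma~\ref{lem:repfnctl}(1) produces $r\in R$ with $g^0_\perp=c(Br,\cdot)$, and Lemma~\ref{lem:repfnctl}(3) yields some $\xi_0\in\Sigma$ with $g^0=d(\xi_0,\cdot)$. Because $BR$ is closed in $\Sigma$ (Lemma~\ref{lem:equinorm}), I can subtract the $c$-orthogonal projection of $\xi_0$ onto $BR$ to obtain $\xi\in(BR)^\perp$ with $d(\xi,\cdot)=d(\xi_0,\cdot)$, the shift being invisible to $d$ by Property~I(1). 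The first equation of \eqref{eq:ehvardualdecompctod} is then automatic. Choosing $p=0$, the fourth equation reduces to $-d(\eta,y)=0$ for all $\eta\in\Sigma$, which matches row~3 of \eqref{eq:dualdecomp} via the surjection in Lemma~\ref{lem:repfnctl}(3); the remaining rows transcribe directly, so $(0,u,y,\xi,r)$ solves \eqref{eq:ehvardualdecompctod}. Lemma~\ref{lem:abs} then gives that this $u$ solves \eqref{eq:vpexp}, and dropping the two $p$-related equations shows $(u,y,\xi,r)$ solves \eqref{eq:redehvardualdecomp}.

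For uniqueness of \eqref{eq:ehvardualdecompctod}, I would take two solutions, write $\delta(\cdot)$ for differences, and test the fourth equation with $\eta=B\delta p$: Property~I(1) gives $d(B\delta p,\delta y)=0$, forcing $c(B\delta p,B\delta p)=0$ and therefore $B\delta p=0$. The fourth equation then reads $d(\eta,\delta y)=0$ for all $\eta\in\Sigma$, while the first equation pins $\delta\xi\in(BR)^\perp$. Setting $\langle\delta g_+,\sigma\rangle:=d(\delta\xi,\sigma)+c(B\delta r,\sigma)$ for $\sigma\in\Sigma$, the remaining three rows assemble a homogeneous instance of \eqref{eq:relaxtwicedual}, so Lemma~\ref{lem:abs} delivers $\delta u=\delta y=0$ and $\delta g_+=0$. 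Re-splitting $\delta g_+$ along $\Sigma'=(BR)^0\oplus((BR)^\perp)^0$, nondegeneracy of $c$ on $BR$ gives $B\delta r=0$, and Property~I(2) applied inside $(BR)^\perp$ gives $\delta\xi=0$. For Part~(2), the same argument is applied directly to \eqref{eq:redehvardualdecomp} after first decomposing $\delta\xi=\delta\xi_\perp+B\delta s$ and absorbing the $BR$-component using Property~I(1).

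The main obstacle I anticipate is the asymmetry between the two arguments of $d$: Property~I imposes an inf--sup only on the first argument restricted to $(BR)^\perp$, so the map $y\mapsto d(\cdot,y)$ may have a nontrivial kernel, which is precisely why $y$ is not claimed unique in Part~(2) and why the Lagrange multiplier $p$ is introduced in \eqref{eq:ehvardualdecompctod} to recover well-posedness. Handling this cleanly requires passing through \eqref{eq:ehvardualdecompctod} rather than attacking \eqref{eq:redehvardualdecomp} head-on. The uniqueness of $p$ in Part~(1) and of $r$ in Part~(2) should be read modulo $\ker B$ when $B$ is not injective; the genuinely unique quantities are $Bp$ and $Br$, which are what enter the remaining equations.
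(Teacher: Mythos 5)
The paper offers no proof of this theorem ("The theorem below holds immediately"), and your reconstruction --- building a solution of \eqref{eq:ehvardualdecompctod} from the unique solution $(u,y,g_+)$ of \eqref{eq:relaxtwicedual} via the splitting $\Sigma'=(BR)^0\oplus((BR)^\perp)^0$ and the representations of Lemma \ref{lem:repfnctl}, then reversing the construction for uniqueness --- is exactly the intended route. The existence construction, the normalisation of $\xi$ into $(BR)^\perp$, the choice $p=0$, and the test $\eta=B\delta p$ to kill the multiplier in the uniqueness step are all correct.

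There is, however, a genuine problem in Part (2), and your closing "obstacle" paragraph has the roles of $y$ and $\xi$ exactly reversed. In your own reassembly the difference triple $(\delta u,\delta y,\delta g_+)$ with $\langle\delta g_+,\sigma\rangle:=d(\delta\xi,\sigma)+c(B\delta r,\sigma)$ solves the homogeneous \eqref{eq:relaxtwicedual}, so Lemma \ref{lem:abs} forces $\delta y=0$: $y$ \emph{is} unique, being a component of the well-posed triple. What is not unique is $\xi$: after you decompose $\delta\xi=\delta\xi_\perp+B\delta s$, Property I(2) kills $\delta\xi_\perp$, but the component $B\delta s$ is invisible to every equation of \eqref{eq:redehvardualdecomp} by Property I(1) and so remains free. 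Hence your argument, carried out correctly, proves $u=\hat u$, $y=\hat y$, $B r=B\hat r$ and $\xi-\hat\xi\in BR$; it cannot deliver $\xi=\hat\xi$ as literally claimed. This is consistent with the paper's own sentence immediately before \eqref{eq:ehvardualdecompctod} ("$\xi$ can not be uniquely determined by \eqref{eq:redehvardualdecomp}") and with Theorem \ref{thm:regconv}(2), where the unique components are $r$, $\uphi$ (the analogue of $y$) and $u$ while $\uzeta$ (the analogue of $\xi$) is omitted; the statement's "$\xi=\hat\xi$" is evidently a slip for "$y=\hat y$". You should prove the corrected claim and flag the discrepancy rather than assert a conclusion your own decomposition shows to be unreachable. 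Two smaller caveats you already handle honestly: uniqueness of $r$ and $p$ (rather than of $Br$ and $Bp$) needs $B$ injective, which is not assumed until Assumption II; and Lemma \ref{lem:repfnctl}(3) tacitly requires $d(\cdot,Bs)=0$ in the \emph{second} argument, which Property I(1) does not literally state but which holds in the paper's applications where $d$ is symmetric.
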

\subsection{Decoupled order reduced formulations}
In the sequel, we focus ourselves on the case that the assumption below holds.
\paragraph{\bf Assumption II} $B$ is injective on $R$, $a_R(u,v)=0$ for  $u,v\in R$, and $b(v,z)=0$ for $v\in R$ and $z\in Y$.

A main result of this paper is the theorem below. 
\begin{theorem}
Provided \textbf{Hypothesis 0} and \textbf{Assumption II}, let $c(\cdot,\cdot)$ be the inner product on $\Sigma$, and $d(\cdot,\cdot)$ be a bilinear form on $\Sigma$ that satisfies \textbf{Property I}. 
\begin{enumerate}
\item The problem \eqref{eq:vpexp} can be decoupled in accordance with the problem \eqref{eq:ehvardualdecompctod} as 
\begin{enumerate}
\item find $r\in R$, such that
\begin{equation}\label{eq:5vsystem-1}
c(Br,Bv)=\langle f_R,v\rangle,\quad\forall\,v\in R;
\end{equation}
\item with $r$ solved out, find $(p,y,\xi)\in R\times Y\times \Sigma$, such that, for $(q,z,\eta)\in R\times Y\times\Sigma$,
\begin{equation}\label{eq:5vsystem-2}
\left\{
\begin{array}{ccccl}
a_Y(y,z)&&-d(\xi,z)&=&\langle f_Y,z\rangle +c(Br,z),
\\
&&c(\xi,Bq)&=& 0
\\
-d(y,\eta)&+c(\eta,Bp)&&=&0.
\end{array}
\right.
\end{equation}
\item with $y$ solved out, find $u\in R$, such that
\begin{equation}\label{eq:5vsystem-3}
c(Bu,Bq)=c(y,Bs),\quad\forall\,s\in R.
\end{equation}
\end{enumerate}
\item The problem \eqref{eq:vpexp} can be decoupled in accordance with the problem \eqref{eq:dualdecomp} as 
\begin{enumerate}
\item find $r\in R$, such that
\begin{equation}\label{eq:decom-1}
c(Br,Bv)=\langle f_R,v\rangle,\quad\forall\,v\in R;
\end{equation}
\item with $r\in R$ solved out, find $(y,\xi)\in Y\times \Sigma$, such that for $(z,\eta)\in Y\times \Sigma$,
\begin{equation}\label{eq:decom-2}
\left\{
\begin{array}{cccl}
a_R(y,z)&-d(\xi,z)&=&\langle f_Y,z\rangle+c(Br,z),
\\
-d(y,\eta)&&=&0;
\end{array}
\right.
\end{equation}
\item with $y$ solved out, find $u\in R$, such that
\begin{equation}\label{eq:decom-3}
c(Bu,Bs)=c(y,Bs), \quad \forall\,s\in R.
\end{equation}
\end{enumerate}
\end{enumerate}
\end{theorem}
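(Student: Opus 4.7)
The plan is to start from the theorem preceding, which identifies \eqref{eq:ehvardualdecompctod} (and, equivalently, \eqref{eq:dualdecomp}) as reformulations of \eqref{eq:vpexp}, and then exhibit that \textbf{Assumption II} renders each system block-triangular in its unknowns, after which each block falls naturally into one of the three listed subsystems.

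For part (1), I will substitute $a_R\equiv 0$ (on $R\times R$) and $b\equiv 0$ (on $R\times Y$) into the five lines of \eqref{eq:ehvardualdecompctod}. The line tested by $v\in R$ collapses to $c(Br,Bv)=\langle f_R,v\rangle$, involving only $r$, which is precisely \eqref{eq:5vsystem-1}. The line tested by $s\in R$ reduces to $c(Bs,Bu)=c(Bs,y)$ and produces $u$ from the (now-known) $y$, which is \eqref{eq:5vsystem-3}. The remaining three lines (tested by $q$, $z$, $\eta$) couple only $(p,y,\xi)$ with the already-determined $r$ appearing on the right-hand side through $c(Br,z)$, and they rearrange into \eqref{eq:5vsystem-2}. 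Conversely, assembling triples of solutions of \eqref{eq:5vsystem-1}--\eqref{eq:5vsystem-3} into a quintuple manifestly satisfies all five lines of \eqref{eq:ehvardualdecompctod}, so by the preceding theorem the associated $u$ solves \eqref{eq:vpexp}.

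For well-posedness of each decoupled piece, \eqref{eq:5vsystem-1} and \eqref{eq:5vsystem-3} are routine: injectivity of $B$ combined with closedness of $BR$ in $\Sigma$ (Lemma \ref{lem:equinorm}) and the open-mapping theorem give $\|r\|_R\lesssim\|Br\|_H$, so that $(r,v)\mapsto c(Br,Bv)$ is coercive on $R$ and Lax--Milgram applies; the bound on the right-hand side $c(y,Bs)$ in \eqref{eq:5vsystem-3} uses $y\in Y\subset H$. For the saddle-point subsystem \eqref{eq:5vsystem-2}, I will read existence and uniqueness of $y$ (and of $\xi$, with $p$ playing the role of a non-unique Lagrange multiplier) directly from the full solution of \eqref{eq:ehvardualdecompctod} provided by the preceding theorem. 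Part (2) is strictly parallel: Lemma \ref{lem:repfnctl} lets one write $g_\perp^0=c(Br,\cdot)$ and $g^0=d(\xi,\cdot)$, whereupon the simplified \eqref{eq:dualdecomp} splits into \eqref{eq:decom-1}--\eqref{eq:decom-3}, and the same Lax--Milgram argument handles the two elliptic pieces.

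The hard part will be the saddle-point subsystem \eqref{eq:5vsystem-2}: since $d(Bs,\tau)=0$ by \textbf{Property I}(1), the $d$-coupling alone cannot pin down $\xi$, and the Lagrange-multiplier line $c(\xi,Bq)=0$ is exactly what forces $\xi$ into $(BR)^\perp$ and restores uniqueness there. The care lies in channelling the inf-sup hypothesis of \textbf{Property I}(2) through Lemma \ref{lem:infsupequi} so that stability constants propagate cleanly through the decoupling---especially in part (2), where the multiplier $p$ is absent and one must argue that the reduced two-field formulation \eqref{eq:decom-2} still yields a well-defined $y\in Y$ even though $\xi$ itself is determined only modulo $BR$.
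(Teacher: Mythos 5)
Your proposal is correct and is essentially the argument the paper intends: the paper states this theorem without an explicit proof, and its content is exactly your observation that under \textbf{Assumption II} the terms $a_R(u,v)$, $b(v,y)$ and $b(u,z)$ vanish, so that \eqref{eq:ehvardualdecompctod} and \eqref{eq:redehvardualdecomp} become block-triangular in $r$, then $(p,y,\xi)$ (resp.\ $(y,\xi)$), then $u$, and split into the three listed subsystems. Your handling of the well-posedness of the pieces --- coercivity of $c(B\cdot,B\cdot)$ on $R$ from the injectivity and closedness of $B$, and inheritance of existence/uniqueness for the saddle-point block from the preceding theorem together with \textbf{Property I} and Lemma \ref{lem:infsupequi} --- is consistent with what the paper relies on.
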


\subsection{Discretisation of the decomposed formulation}
Suppose subspaces $R_{h,i}\subset R$, $i=1,2,3$, $\Sigma_h\subset \Sigma$ and $Y_h\subset Y$ are respectively closed, and $BR_{h,2}\subset \Sigma_h$. For Problem \eqref{eq:ehvardualdecompctod}, we propose the discretisation scheme associated with \eqref{eq:5vsystem-1}--\eqref{eq:5vsystem-3}:
\begin{enumerate}
\item find $r_h\in R_{h,1}$, such that
\begin{equation}\label{eq:5vsystem-1dis}
c(Br_h,Bv_h)=\langle f_R,v_h\rangle,\quad\forall\,v_h\in R_{h,1};
\end{equation}
\item with $r_h$ solved out, find $(p_h,y_h,\xi_h)\in R_{h,2}\times Y_h\times \Sigma_h$, such that, for $(q_h,z_h,\eta_h)\in R_h\times Y_h\times\Sigma_h$,
\begin{equation}\label{eq:5vsystem-2dis}
\left\{
\begin{array}{ccccl}
a_Y(y_h,z_h)&&-d(\xi_h,z_h)&=&\langle f_Y,z_h\rangle +c(Br_h,z_h),
\\
&&c(\xi_h,Bq_h)&=& 0
\\
-d(y_h,\eta_h)&+c(\eta_h,Bp_h)&&=&0.
\end{array}
\right.
\end{equation}
\item with $y_h$ solved out, find $u_h\in R_{h,3}$, such that
\begin{equation}\label{eq:5vsystem-3dis}
c(Bu_h,Bq_h)=c(y_h,Bs_h),\quad\forall\,s_h\in R_{h,3}.
\end{equation}
\end{enumerate}
Meanwhile, for Problem \eqref{eq:redehvardualdecomp}, we propose a discretisation associated with  \eqref{eq:decom-1} -- \eqref{eq:decom-3}:
\begin{enumerate}
\item find $r_h\in R_{h,1}$, such that
\begin{equation}\label{eq:decom-1dis}
c(Br_h,Bv_h)=\langle f_R,v_h\rangle,\quad\forall\,v_h\in R_{h,1};
\end{equation}
\item with $r_h$ solved out, find $(y_h,\xi_h)\in Y_h\times \Sigma_h$, such that for $(z_h,\eta_h)\in Y_h\times \Sigma_h$,
\begin{equation}\label{eq:decom-2dis}
\left\{
\begin{array}{cccl}
a_Y(y_h,z_h)&-d_h(\xi_h,z_h)&=&\langle f_Y,z_h\rangle+c(Br_h,z_h),
\\
-d_h(y_h,\eta_h)&&=&0;
\end{array}
\right.
\end{equation}
\item with $y_h$ solved out, find $u_h\in R_{h,3}$, such that
\begin{equation}\label{eq:decom-3dis}
c(Bu_h,Bs_h)=c(y_h,Bs_h),\quad\forall\,s_h\in R_{h,3}.
\end{equation}
\end{enumerate}
By \textbf{Hypothesis 0} and \textbf{II}, problems \eqref{eq:5vsystem-1dis} and \eqref{eq:5vsystem-3dis} are well-posed. For \eqref{eq:5vsystem-2dis}, we need Property III:
\paragraph{\bf Property III} 
\begin{enumerate}
\item $a_Y(\cdot,\cdot)$ is coercive on $\{z_h\in Y_h:d(z_h,\eta_h)=0,\forall\,\eta_h\in (BR_{h,2})^\perp\}$;
\item $\displaystyle\inf_{\tau_h\in (BR_{h,2})^\perp}\sup_{y_h\in Y_h}\frac{d(\tau_h,y_h)}{\|\tau_h\|_\Sigma\|y_h\|_Y}\geqslant C$ where $(BR_{h,2})^\perp$ is the orthogonal complement of $BR_{h,2}$ in $\Sigma_h$.
\end{enumerate}

Similar to Lemma \ref{lem:infsupequi}, we can prove the lemma below.
\begin{lemma}
$\displaystyle\inf_{\tau_h\in (BR_{h,2})^\perp}\sup_{y_h\in Y_h}\frac{d(\tau_h,y_h)}{\|\tau_h\|_\Sigma\|y_h\|_Y}
\cequiv \displaystyle\inf_{\tau_h\in \Sigma_h}\sup_{y_h\in Y_h,r_h\in R_{h,2}}\frac{d(\tau_h,y_h)+c(\tau_h,Br_h)}{\|\tau_h\|_\Sigma(\|y_h\|_Y+\|Br_h\|_H)}.$
\end{lemma}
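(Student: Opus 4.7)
The plan is to mimic the proof of Lemma \ref{lem:infsupequi} in the discrete setting, exploiting the fact that $c(\cdot,\cdot)$ restricted to the closed subspace $\Sigma_h$ remains the inner product generating $\|\cdot\|_\Sigma$. The three structural ingredients I will use are: (i) the $c$-orthogonal splitting $\Sigma_h = BR_{h,2}\oplus (BR_{h,2})^\perp$, with the Pythagorean identity $\|\tau_h\|_\Sigma^2=\|\tau_h^1\|_\Sigma^2+\|\tau_h^2\|_\Sigma^2$; (ii) the annihilation property from \textbf{Property I}, $d(Bs,\cdot)=0$ for every $s\in R$; (iii) Lemma \ref{lem:equinorm}, giving $\|Br\|_H\cequiv\|Br\|_\Sigma$ for $r\in R$.

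First I would decompose an arbitrary $\tau_h\in\Sigma_h$ as $\tau_h=\tau_h^1+\tau_h^2$ with $\tau_h^1\in BR_{h,2}$ and $\tau_h^2\in(BR_{h,2})^\perp$. Writing $\tau_h^1=Br_h^1$ for some $r_h^1\in R_{h,2}$, ingredient (ii) forces $d(\tau_h^1,y_h)=0$, while the $c$-orthogonality of the decomposition forces $c(\tau_h^2,Br_h)=0$ for every $r_h\in R_{h,2}$. The numerator on the right-hand side therefore collapses to $d(\tau_h^2,y_h)+c(\tau_h^1,Br_h)$, with the two summands living in decoupled pieces.

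Next I would argue that the joint inf--sup separates into the minimum of two independent inf--sup conditions, one for the $c$-pairing on $BR_{h,2}\times R_{h,2}$ and one for the $d$-pairing on $(BR_{h,2})^\perp\times Y_h$. Testing the right-hand quantity against $(y_h,0)$ and $(0,r_h)$ yields the lower bound by the minimum of the two pieces; the reverse direction comes from the triangle inequality in the numerator combined with the sum structure $\|y_h\|_Y+\|Br_h\|_H$ in the denominator, exactly as in the display inside the proof of Lemma \ref{lem:infsupequi}. For the $BR_{h,2}$-piece, taking $r_h$ with $Br_h=\tau_h^1$ and invoking Lemma \ref{lem:equinorm} gives
\[
\frac{c(\tau_h^1,Br_h)}{\|\tau_h^1\|_\Sigma\|Br_h\|_H}=\frac{\|Br_h\|_\Sigma^2}{\|Br_h\|_\Sigma\|Br_h\|_H}\cequiv 1,
\]
so that piece is bounded below by an $h$-independent constant. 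Taking the minimum then leaves the $(BR_{h,2})^\perp$-piece as the effective inf--sup, which is precisely the left-hand side of the claimed equivalence.

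The main point that requires care rather than being routine is ensuring that the decomposition used in step one is stable with constants independent of $h$. This is not an obstacle here because $(BR_{h,2})^\perp$ is defined as the $c$-orthogonal complement inside $\Sigma_h$ and $c$ is the inner product inducing $\|\cdot\|_\Sigma$, so Pythagoras gives the identity with constant $1$. Beyond that, the argument is a direct discrete transcription of Lemma \ref{lem:infsupequi}, and no further discrete-specific assumption beyond $BR_{h,2}\subset\Sigma_h$ is needed.
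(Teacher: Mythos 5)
Your proof is correct and follows essentially the same route as the paper, which omits the argument entirely with the remark that it is ``Similar to Lemma \ref{lem:infsupequi}'': you faithfully transcribe the orthogonal splitting $\Sigma_h=BR_{h,2}\oplus(BR_{h,2})^\perp$, the annihilation $d(Br_h,\cdot)=0$, and the norm equivalence $\|Br_h\|_H\cequiv\|Br_h\|_\Sigma$ from Lemma \ref{lem:equinorm} into the discrete setting. Your closing observation that the splitting is stable with constant $1$ because $(BR_{h,2})^\perp$ is the $c$-orthogonal complement inside $\Sigma_h$ is exactly the point that makes the transcription work.
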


\begin{lemma}
If $R_{h,2}$, $Y_h$ and $\Sigma_h$ satisfy {\bf Property III},  the problem  \eqref{eq:5vsystem-2dis}  is well posed.
\end{lemma}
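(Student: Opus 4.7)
The plan is to reduce the system \eqref{eq:5vsystem-2dis} to a standard Brezzi-type saddle-point subproblem for $(y_h,\xi_h)$ by decoupling $p_h$ using the $c$-orthogonal splitting $\Sigma_h=\Sigma_h^0\oplus BR_{h,2}$, where $\Sigma_h^0:=(BR_{h,2})^\perp$ in $\Sigma_h$ with respect to the inner product $c$. Property III will then supply exactly the two hypotheses required by Brezzi's theorem.

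First I would observe that the second equation of \eqref{eq:5vsystem-2dis} directly forces $\xi_h\in\Sigma_h^0$. Next, decomposing any test $\eta_h\in\Sigma_h$ as $\eta_h=\eta_h^0+Bs_h$ with $\eta_h^0\in\Sigma_h^0$ and $s_h\in R_{h,2}$ and splitting the third equation accordingly, the $\eta_h^0$-component gives $c(\eta_h^0,Bp_h)=0$ (by $c$-orthogonality of the splitting), hence $d(y_h,\eta_h^0)=0$ for every $\eta_h^0\in\Sigma_h^0$, placing $y_h$ in the kernel $\mathcal{K}_h:=\{z\in Y_h:d(z,\eta)=0,\ \forall\,\eta\in\Sigma_h^0\}$; the $Bs_h$-component reads $c(Bs_h,Bp_h)=d(y_h,Bs_h)$ for all $s_h\in R_{h,2}$. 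Because $B$ is injective on $R$ (Assumption II) and $c$ is an inner product on $\Sigma$, the form $c(B\cdot,B\cdot)$ is an inner product on $R_{h,2}$, and so this equation uniquely and continuously determines $p_h$ as a linear function of $y_h$.

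The residual problem is then to find $(y_h,\xi_h)\in\mathcal{K}_h\times\Sigma_h^0$ satisfying $a_Y(y_h,z_h)-d(\xi_h,z_h)=\langle f_Y,z_h\rangle+c(Br_h,z_h)$ for every $z_h\in Y_h$. This is a Brezzi-type saddle-point problem with primal form $a_Y$, coupling form $-d$ between $Y_h$ and the multiplier space $\Sigma_h^0$, and constraint kernel exactly $\mathcal{K}_h$. Property III.1 supplies the required coercivity of $a_Y$ on $\mathcal{K}_h$, and Property III.2 supplies the inf-sup stability of $-d$ between $\Sigma_h^0$ and $Y_h$. The Brezzi theorem then yields a unique solution $(y_h,\xi_h)$ with continuous dependence on the data; composing this with the explicit $p_h$-bound from the previous step proves well-posedness of the full system.

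The main subtlety I would handle with care is that $d$ is not assumed symmetric, so the multiplier $\xi_h$ sits in the first slot of $d$ in the first equation while the primal variable $y_h$ sits in the first slot of $d$ in the third equation. After the reduction to $(y_h,\xi_h)\in\mathcal{K}_h\times\Sigma_h^0$, the two inf-sup conditions needed in the non-symmetric Brezzi framework have to be matched: in the finite-dimensional setting this follows from the fact that Property III.2 forces the matrix of $d:\Sigma_h^0\times Y_h\to\mathbb{R}$ to have full rank, from which the ``transposed'' inf-sup is recovered by the same duality/dimension argument used in the proof of Lemma \ref{lem:infsupequi}.
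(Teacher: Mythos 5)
The paper states this lemma without proof, so there is no argument of the author's to compare against and your proposal has to stand on its own. The core of your reduction is the natural one and is surely what is intended: the second equation forces $\xi_h$ into $\Sigma_h^0:=(BR_{h,2})^\perp$; splitting the test function of the third equation along the $c$-orthogonal decomposition $\Sigma_h=BR_{h,2}\oplus\Sigma_h^0$ decouples $p_h$ (which is determined from $y_h$ through $c(B\cdot,B\cdot)$, positive definite on $R_{h,2}$ by the injectivity of $B$) and leaves the constraint $d(y_h,\eta_h^0)=0$ for $\eta_h^0\in\Sigma_h^0$; the residual problem for $(y_h,\xi_h)$ is then a saddle-point problem whose Brezzi hypotheses are exactly the two items of \textbf{Property III}. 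When $d$ is symmetric --- as it is in the paper's application, where $d(\cdot,\cdot)=(\curl\cdot,\curl\cdot)$ --- this is a complete and correct proof.

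The gap is in your final paragraph. For non-symmetric $d$ the two off-diagonal blocks of the reduced system come from two \emph{different} bilinear forms: the multiplier block uses $(\xi,z)\mapsto d(\xi,z)$ and the constraint block uses $(y,\eta)\mapsto d(y,\eta)$. Their matrices, $\bigl(d(\eta_i,z_j)\bigr)$ and $\bigl(d(z_j,\eta_i)\bigr)$, are \emph{not} transposes of one another, so full rank of the first (which is what \textbf{Property III}(2) delivers) carries no information about the second; the duality/dimension argument of Lemma \ref{lem:infsupequi} transposes a single matrix and cannot bridge the two. Moreover, even granting both inf-sup conditions, the non-symmetric Brezzi theory requires inf-sup conditions for $a_Y$ between the two kernels $K_1=\{z_h\in Y_h:d(z_h,\eta_h)=0\ \forall\,\eta_h\in\Sigma_h^0\}$ and $K_2=\{z_h\in Y_h:d(\eta_h,z_h)=0\ \forall\,\eta_h\in\Sigma_h^0\}$, which differ in general, while \textbf{Property III}(1) only gives coercivity on $K_1$. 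A degenerate example shows the issue is real: if $d(z,\eta)=0$ for all $z\in Y_h$ and $\eta\in\Sigma_h^0$ while $d(\eta,\cdot)$ remains nondegenerate on $Y_h$, then \textbf{Property III} can hold with $a_Y$ coercive on $K_1=Y_h$, yet every nonzero $\xi\in\Sigma_h^0$ yields a nontrivial homogeneous solution by solving $a_Y(y,z)=d(\xi,z)$. You should therefore either restrict to symmetric $d$ (which suffices for everything in Sections \ref{sec:mixbl} and \ref{sec:fem}) or add the transposed inf-sup and the corresponding kernel condition as explicit hypotheses.
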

\begin{lemma}\label{lem:absest}
Given $F_R\in R'$ and $F_Y\in Y'$, let $r\sim(p,y,\xi)\sim u$ and $r_h\sim(p_h,y_h,\xi_h)\sim u_h$ be the solution of Problems \eqref{eq:5vsystem-1}--\eqref{eq:5vsystem-3} and Problems \eqref{eq:5vsystem-1dis}--\eqref{eq:5vsystem-3dis}, respectively. Then
\begin{enumerate}
\item $\displaystyle\|r-r_h\|_{R}\leqslant C \inf_{v_h\in R_{h,1}}\|r-s_h\|_R$;
\item $p=p_h=0$;
\item $\displaystyle\|y-y_h\|_{Y}+\|\xi-\xi_h\|_\Sigma\leqslant C\inf_{z_h\in Y_h,\eta_h\in \Sigma_h}\|y-z_h\|_{Y}+\|\xi-\eta_h\|_\Sigma+\sup_{z_h\in Y_h}\frac{c(B(r_h-r),z_h)}{\|z_h\|_Y}$;
\item $\displaystyle\|u-u_h\|_{R}\leqslant C( \inf_{v_h\in R_{h,3}}\|u-v_h\|_R+\sup_{s_h\in R_{h,3}}\frac{c(y-y_h,Bs_h)}{\|s_h\|_R})$.
\end{enumerate}
\end{lemma}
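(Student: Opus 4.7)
The four conclusions correspond to the three sequential subproblems \eqref{eq:5vsystem-1}, \eqref{eq:5vsystem-2}, \eqref{eq:5vsystem-3} and their discretisations, so the plan is to handle them one at a time and propagate the perturbations forward. For (1), Assumption II together with the norm equivalence $\|Br\|_H\cequiv\|Br\|_\Sigma$ of Lemma \ref{lem:equinorm} shows that $(v,w)\mapsto c(Bv,Bw)$ is an inner product on $R$ equivalent to $\|\cdot\|_R^2$. The pair \eqref{eq:5vsystem-1}--\eqref{eq:5vsystem-1dis} is therefore a conforming Galerkin discretisation of a coercive elliptic problem, and C\'ea's lemma yields the stated quasi-optimal bound. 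For (4), \eqref{eq:5vsystem-3}--\eqref{eq:5vsystem-3dis} is again a conforming Galerkin pair for the same coercive form but with the right-hand side $c(y,B\cdot)$ replaced by $c(y_h,B\cdot)$; Strang's first lemma then produces the best-approximation term together with the consistency supremum involving $y-y_h$.

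For (2), I would test the third equation of \eqref{eq:5vsystem-2} (read as in \eqref{eq:ehvardualdecompctod}, namely $c(\eta,Bp)-d(\eta,y)=0$) with $\eta=Bs$, $s\in R$. Property I kills the $d$-term, leaving $c(Bs,Bp)=0$ for every $s\in R$. Taking $s=p$ and using that $c$ is an inner product on $\Sigma$ forces $Bp=0$, and injectivity of $B$ on $R$ (Assumption II) gives $p=0$. Repeating the argument in $R_{h,2}$, which is admissible because $BR_{h,2}\subset\Sigma_h$, produces $p_h=0$.

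Granted $p=p_h=0$, claim (3) reduces to the error analysis of a pair of saddle-point systems for $(y,\xi)\in Y\times\Sigma$ and $(y_h,\xi_h)\in Y_h\times\Sigma_h$. Well-posedness of the continuous system comes from Property I via Lemma \ref{lem:infsupequi}; well-posedness of the discrete system is precisely Property III. Subtracting the two schemes, and using $R_{h,2}\subset R$ so that $c(\xi-\xi_h,Bq_h)=0$ survives for $q_h\in R_{h,2}$, one obtains the perturbed error equations
\begin{equation*}
a_Y(y-y_h,z_h)-d(\xi-\xi_h,z_h)=c(B(r-r_h),z_h),\qquad d(y-y_h,\eta_h)=0,
\end{equation*}
for all $(z_h,\eta_h)\in Y_h\times\Sigma_h$. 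Applying the standard Brezzi quasi-optimality estimate, with Property III supplying the discrete inf-sup and kernel coercivity, delivers the stated bound. The principal obstacle is precisely this item: one must correctly interpret $c(B(r-r_h),\cdot)$ as a data perturbation of the right-hand side (rather than a consistency defect of the forms themselves), check that Property III exactly supplies what is needed to absorb it in the Brezzi estimate, and keep the constraint $c(\xi-\xi_h,Bq_h)=0$ tracked separately from the main saddle-point analysis. Once (3) is in hand, (4) is immediate from the Strang-lemma argument sketched above.
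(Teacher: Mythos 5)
The paper states Lemma \ref{lem:absest} without proof, so there is nothing to compare against line by line; your argument is precisely the standard one the paper implicitly invokes (C\'ea's lemma for \eqref{eq:5vsystem-1dis}, testing the third equation with $\eta=Bs$ and using $d(Bs,\cdot)=0$ plus injectivity of $B$ to get $p=p_h=0$, Brezzi's quasi-optimality with the right-hand-side perturbation $c(B(r-r_h),\cdot)$ for \eqref{eq:5vsystem-2dis}, and the first Strang lemma for \eqref{eq:5vsystem-3dis}), and it is correct. Your reading of the $d$-term as $d(\eta,y)$, consistent with \eqref{eq:ehvardualdecompctod} rather than the swapped ordering printed in \eqref{eq:5vsystem-2}, is the right one, since Property I only annihilates $d$ when its \emph{first} argument lies in $BR$.
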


\begin{lemma}
Given $F_R\in R'$ and $F_Y\in Y'$, let $r_h\sim(p_h,y_h,\xi_h)\sim u_h$ be the solution of Problems \eqref{eq:5vsystem-1dis}--\eqref{eq:5vsystem-3dis}. Then $r_h\sim(y_h,\xi_h)\sim u_h$ is a solution of Problems \eqref{eq:decom-1dis}--\eqref{eq:decom-3dis}; moreover, if $\hat r_h\sim(\hat y_h,\hat\xi_h)\sim\hat u_h$ is another solution of Problems \eqref{eq:decom-1dis}--\eqref{eq:decom-3dis}, then $\hat r_h=r_h$, $\hat y_h=y_h$ and $\hat u_h=u_h$.
\end{lemma}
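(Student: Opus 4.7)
The plan is to prove the existence assertion by checking that discarding $p_h$ from the 5v solution yields a solution of the decoupled system, and then to establish uniqueness of $r_h$, $u_h$, and $y_h$ in turn. The first two uniqueness statements are routine; the delicate step is $y_h$, because \eqref{eq:decom-2dis} is a singular saddle-point problem.

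For existence, observe that \eqref{eq:5vsystem-1dis} and \eqref{eq:5vsystem-3dis} coincide verbatim with \eqref{eq:decom-1dis} and \eqref{eq:decom-3dis}, so only the middle block requires attention. By Lemma \ref{lem:absest} we have $p_h=0$, so the third line of \eqref{eq:5vsystem-2dis} reduces to $-d(y_h,\eta_h)=0$, matching the second line of \eqref{eq:decom-2dis}; the first line of \eqref{eq:5vsystem-2dis} is identical to the first line of \eqref{eq:decom-2dis}. The extra constraint $c(\xi_h,Bq_h)=0$ imposed in the 5v formulation merely selects the particular solution with $\xi_h\in(BR_{h,2})^\perp$ and does not prevent $(y_h,\xi_h)$ from satisfying \eqref{eq:decom-2dis}.

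For uniqueness of $r_h$, note that \eqref{eq:decom-1dis} is coercive: under \textbf{Assumption II}, $B$ is injective on $R$ and $c(\cdot,\cdot)$ is the inner product on $\Sigma$, so $v\mapsto c(Bv,Bv)^{1/2}$ is a norm on $R_{h,1}$. Hence $\hat r_h=r_h$. Once $y_h$ is pinned down, the same argument applied on $R_{h,3}$ yields $\hat u_h=u_h$ from \eqref{eq:decom-3dis}.

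The main obstacle is the uniqueness of $y_h$: because \textbf{Property I} gives $d(Bq_h,\cdot)=0$, the $BR_{h,2}$-component of $\hat\xi_h$ is unconstrained by \eqref{eq:decom-2dis}, and naively subtracting two solutions and testing against $\hat y_h - y_h$ will not directly yield a coercivity bound on $y_h$. My plan is to project out this ambiguity explicitly. Given any decoupled solution $(\hat y_h,\hat\xi_h)$ associated with the unique $r_h$, perform the $c$-orthogonal decomposition $\Sigma_h = BR_{h,2}\oplus(BR_{h,2})^\perp$, writing $\hat\xi_h = \tilde\xi_h + Bq_h^*$ with $\tilde\xi_h\in(BR_{h,2})^\perp$ and $q_h^*\in R_{h,2}$ (the latter uniquely determined by $B$-injectivity on $R$). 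I then verify that the triple $(0,\hat y_h,\tilde\xi_h)$ solves the 5v middle system \eqref{eq:5vsystem-2dis}: the constraint $c(\tilde\xi_h,Bq_h)=0$ holds by construction; the identity $d(\tilde\xi_h,z_h)=d(\hat\xi_h,z_h)$ (using $d(Bq_h^*,z_h)=0$) preserves the first line; and with $p_h = 0$ the third line collapses to $-d(\hat y_h,\eta_h)=0$, which is the second equation of \eqref{eq:decom-2dis}. By the well-posedness of \eqref{eq:5vsystem-2dis} guaranteed under \textbf{Property III}, the 5v solution is unique, so $\hat y_h = y_h$, and $\hat u_h = u_h$ then follows from the previous paragraph.
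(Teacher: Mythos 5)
The paper states this lemma without proof, so there is no in-paper argument to compare against; judged on its own, your proposal is correct and complete. The existence half is immediate once $p_h=0$ is cited from Lemma \ref{lem:absest}, and your handling of the only delicate point --- uniqueness of $y_h$ despite the singularity of \eqref{eq:decom-2dis} in the $BR_{h,2}$-component of $\xi_h$ --- is right: splitting $\hat\xi_h=\tilde\xi_h+Bq_h^*$ $c$-orthogonally against $BR_{h,2}$, using $d(Bq_h^*,\cdot)=0$ from \textbf{Property I} to check that $(0,\hat y_h,\tilde\xi_h)$ solves the well-posed system \eqref{eq:5vsystem-2dis}, and then invoking uniqueness of that system to get $\hat y_h=y_h$ (after which $\hat u_h=u_h$ follows from injectivity of $B$ on $R_{h,3}$) is exactly the mechanism the two parallel formulations are built to support, and it mirrors the likewise unproven continuous-level equivalence theorem of Section \ref{sec:abs}. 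Two loose ends, both largely inherited from the paper rather than created by you: \eqref{eq:decom-2dis} is written with an undefined form $d_h$, which you silently identify with $d$ (the intended reading, but worth saying); and your uniqueness step leans on unique solvability of \eqref{eq:5vsystem-2dis}, i.e.\ on \textbf{Property III}, which the lemma does not list among its hypotheses --- you should state explicitly that you are working under it.
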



%
%
%
\section{Decoupled formulation of the 3D bi-Laplacian equation}
\label{sec:mixbl}

In this section, we study the finite element method for the three dimensional bi-Laplacian equation
\begin{equation}\label{eq:modelbl}
(-\Delta)\left(-\alpha(x)\Delta u\right)=f,
\end{equation}
with the homogeneous boundary data $u=0$ and $\nabla u=\undertilde{0}$ and with $0<\alpha_s<\alpha(x)<\alpha_b$.

The primal variational formulation is, given $f\in H^{-2}(\Omega)$, to find $u\in H^2_0(\Omega)$, such that
\begin{equation}
(\alpha\Delta u,\Delta v)=\langle f,v\rangle,\quad\forall\,v\in H^2_0(\Omega).
\end{equation}

As $H^{2}_0(\Omega)$ is configurated by $\{H^1_0(\Omega),\undertilde{H}{}^1_0(\Omega),\nabla \}$, we rewrite the model problem as, given $f_1\in H^{-1}(\Omega)$ and $\uf{}_2\in \undertilde{H}{}^{-1}(\Omega)$, to find $u\in H^2_0(\Omega)$, such that
\begin{equation}\label{eq:primalbl}
(\alpha\dv\nabla u,\dv\nabla v)+(\curl\nabla u,\curl\nabla v)=\langle f_1,v\rangle+\langle\uf{}_2,\nabla v\rangle,\quad\forall\,v\in H^2_0(\Omega).
\end{equation}
Note that we add a mute term $(\curl\nabla u,\curl\nabla v)$ here without any difference.

\subsection{Structure of Sobolev spaces}

\begin{lemma}\label{lem:capreg}
(\cite{GiraultRaviart1986})For polyhedron domain $\Omega$, $H_0(\curl,\Omega)\cap H_0(\dv,\Omega)=\undertilde{H}{}^1_0(\Omega).$ Moreover, if $\Omega$ is convex, $H_0(\curl,\Omega)\cap H(\dv,\Omega)\subset \undertilde{H}^1(\Omega),$ and $H(\curl,\Omega)\cap H_0(\dv,\Omega)\subset \undertilde{H}^1(\Omega).$
\end{lemma}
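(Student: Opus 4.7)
The plan is to derive the intersection regularity from an integration-by-parts identity in the first part, and from a Helmholtz-type decomposition combined with elliptic regularity on convex domains in the second part, which is the standard route of \cite{GiraultRaviart1986}. The trivial inclusion $\undertilde{H}{}^1_0(\Omega) \subset H_0(\curl,\Omega)\cap H_0(\dv,\Omega)$ is immediate from the definitions, since for $\uv\in\undertilde{H}{}^1_0(\Omega)$ both $\curl\uv$ and $\dv\uv$ lie in $L^2(\Omega)$ and the vanishing of the full trace on $\Gamma$ forces the tangential and normal traces to vanish simultaneously.

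For the reverse inclusion in the first equality, the core identity is
\begin{equation*}
\int_\Omega |\nabla\uv|^2\,\dx = \int_\Omega |\curl\uv|^2\,\dx + \int_\Omega |\dv\uv|^2\,\dx\qquad\forall\,\uv\in \undertilde{C}{}^\infty_0(\Omega),
\end{equation*}
obtained by expanding $(\partial_iv_j)(\partial_iv_j)$ and integrating by parts twice inside $\Omega$. Extending this identity by density of $\undertilde{C}{}^\infty_0(\Omega)$ in $H_0(\curl,\Omega)\cap H_0(\dv,\Omega)$ with respect to the graph norm $\|\cdot\|_{L^2}+\|\curl\cdot\|_{L^2}+\|\dv\cdot\|_{L^2}$ yields $\|\uv\|_{H^1}\lesssim \|\uv\|_{L^2}+\|\curl\uv\|_{L^2}+\|\dv\uv\|_{L^2}$ and identifies the intersection with $\undertilde{H}{}^1_0(\Omega)$. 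The density step is the main technical input on a Lipschitz polyhedron and is obtained by extension, mollification and truncation arrangements that preserve both vanishing traces simultaneously near edges and vertices.

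For the convex case, take $\uv\in H_0(\curl,\Omega)\cap H(\dv,\Omega)$ and perform a Helmholtz decomposition $\uv = \nabla\phi + \uw$ with $\phi\in H^1_0(\Omega)$ chosen as the solution of $-\Delta\phi = -\dv\uv \in L^2(\Omega)$. By the Grisvard--Dauge $H^2$-regularity of the Dirichlet Laplacian on a convex polyhedron, $\phi\in H^2(\Omega)$ and hence $\nabla\phi\in \undertilde{H}^1(\Omega)$. The remainder $\uw = \uv-\nabla\phi$ is then divergence-free, satisfies $\uw\times\undertilde{n} = \undertilde{0}$ on $\Gamma$, and has $\curl\uw = \curl\uv\in L^2$; using the simple-connectedness of $\Omega$ and a vector-potential construction it can be written as $\curl\upsi$ for a suitable $\upsi$ to which the first equality of the lemma applies, so that $\curl\curl\upsi=\curl\uv\in L^2$ lifts $\uw$ into $\undertilde{H}^1(\Omega)$ and therefore $\uv\in \undertilde{H}^1(\Omega)$. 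The symmetric inclusion $H(\curl,\Omega)\cap H_0(\dv,\Omega)\subset \undertilde{H}^1(\Omega)$ is handled by the mirror argument with a scalar potential carrying homogeneous Neumann data, invoking $H^2$-regularity of the Neumann Laplacian on the convex polyhedron.

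The main obstacle is supplying the two regularity ingredients in the polyhedral setting: the density of $\undertilde{C}{}^\infty_0(\Omega)$ in the graph norm on the doubly-vanishing intersection, which underwrites the first equality and is what allows it to hold without any convexity hypothesis, and the $H^2$-regularity of the Dirichlet and Neumann Laplacians on a convex polyhedron, which is where convexity is indispensable since re-entrant edges or corners otherwise generate non-$H^2$ singular solutions even for smooth data. Both ingredients are classical and can be imported from \cite{GiraultRaviart1986} and the standard literature on corner singularities.
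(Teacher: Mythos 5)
The paper itself offers no proof of this lemma --- it is imported verbatim from \cite{GiraultRaviart1986} --- so the only question is whether your sketch would stand on its own. For the first identity your route is workable but puts all the weight on the density of $\undertilde{C}{}^\infty_0(\Omega)$ in $H_0(\curl,\Omega)\cap H_0(\dv,\Omega)$ for the graph norm, which is essentially as hard as the statement itself and which you only gesture at. The standard argument avoids it entirely: extend $\uv$ by zero to $\mathbb{R}^3$ --- the vanishing tangential and normal traces are exactly what make $\curl$ and $\dv$ commute with the zero extension --- and read off from the pointwise Fourier identity $|\xi|^2|\hat{v}(\xi)|^2=|\xi\times\hat{v}(\xi)|^2+|\xi\cdot\hat{v}(\xi)|^2$ that the extension lies in $\undertilde{H}{}^1(\mathbb{R}^3)$ with $\|\nabla\uv\|_{0,\Omega}^2=\|\curl\uv\|_{0,\Omega}^2+\|\dv\uv\|_{0,\Omega}^2$; an $H^1(\mathbb{R}^3)$ field vanishing outside a Lipschitz domain restricts to $H^1_0$ of that domain, and the reverse inclusion is trivial. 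This gives the equality on any Lipschitz domain with no convexity and no density lemma.

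The convex case is where your argument has a genuine gap. The Helmholtz reduction is correct: with $\phi\in H^1_0(\Omega)\cap H^2(\Omega)$ by Grisvard's regularity, the remainder $\uw=\uv-\nabla\phi$ satisfies $\uw\times\undertilde{n}=\undertilde{0}$, $\dv\uw=0$ and $\curl\uw=\curl\uv\in\undertilde{L}{}^2(\Omega)$, so everything reduces to showing $\uw\in\undertilde{H}{}^1(\Omega)$. But the vector-potential step you invoke for this is circular: writing $\uw=\curl\upsi$ with $\upsi\in\undertilde{H}{}^1(\Omega)$ gives no control on $\nabla\uw$ unless $\upsi\in\undertilde{H}{}^2(\Omega)$, and neither $\uw$ nor $\upsi$ has \emph{both} traces vanishing, so the first part of the lemma applies to neither; asserting that ``$\curl\curl\upsi\in\undertilde{L}{}^2$ lifts $\uw$ into $\undertilde{H}{}^1$'' is exactly the regularity statement you are trying to prove. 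The missing ingredient --- the place where convexity must enter a second time, for the vector field and not only for the scalar potential --- is the inequality $\|\nabla\uu\|_{0,\Omega}^2\leqslant\|\curl\uu\|_{0,\Omega}^2+\|\dv\uu\|_{0,\Omega}^2$ for $\uu\in\undertilde{H}{}^1(\Omega)$ with $\uu\times\undertilde{n}=\undertilde{0}$ (resp.\ $\uu\cdot\undertilde{n}=0$), which comes from an integration-by-parts identity whose boundary term carries the second fundamental form and hence has a favourable sign on convex domains, together with an approximation of the convex polyhedron by smooth convex domains and a density argument in the intersection space. This is precisely the content of Theorems 3.7--3.9 of \cite{GiraultRaviart1986}; without importing it, your sketch does not establish either convex-case inclusion.
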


\begin{lemma}\label{lem:curlsur} (\cite{Pasciak.J;Zhao.J2002,Hiptmair.R;Xu.J2007})
Given $\ueta\in H_0(\curl,\Omega)$, there exists a $\uphi\in\undertilde{H}{}^1_0(\Omega)$, such that $\curl\uphi=\curl\ueta$, and $\|\uphi\|_{1,\Omega}\leqslant C\|\ueta\|_{\curl,\Omega}$, with $C$ a generic positive constant uniform in $H_0(\curl,\Omega)$.
\end{lemma}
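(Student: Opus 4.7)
The plan is to construct $\uphi$ in two stages: first manufacture an auxiliary $\upsi\in\undertilde{H}{}^1(\Omega)$ with $\curl\upsi=\curl\ueta$ by a global vector-potential argument, then correct $\upsi$ by a gradient to enforce the Dirichlet trace, exploiting the simple connectedness of $\Omega$.

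For the first stage I would set $\undertilde{g}:=\curl\ueta\in L^2(\Omega)$; trivially $\dv\undertilde{g}=0$, and since $\ueta\in H_0(\curl,\Omega)$ forces $\nu\times\ueta=0$ on $\partial\Omega$, testing $\undertilde{g}$ against $\nabla\varphi$ for $\varphi\in H^1(\Omega)$ and integrating by parts shows that $\undertilde{g}$ has vanishing weak normal trace. Consequently the extension of $\undertilde{g}$ by zero to an open ball $B\supset\overline\Omega$ lies in $H(\dv,B)$ with divergence zero, and since $B$ is contractible the classical vector-potential theorem (e.g.\ Theorem~I.3.5 of Girault--Raviart) yields $\upsi\in\undertilde{H}{}^1(B)$ with $\curl\upsi$ equal to that extension and $\|\upsi\|_{1,B}\lesssim\|\curl\ueta\|_{0,\Omega}$.

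For the second stage, $\curl\upsi=0$ on $B\setminus\overline\Omega$, so simple connectedness of the relevant component of $B\setminus\overline\Omega$ together with simple connectedness of $\Omega$ (assumed in the paper) yields $\psi\in H^1(B\setminus\overline\Omega)$ with $\upsi=\nabla\psi$ there; because $\nabla\psi=\upsi\in\undertilde{H}{}^1$ locally, $\psi$ is actually in $H^2$ near $\partial\Omega$. A bounded $H^2$-lifting of $(\psi,\partial_\nu\psi)|_{\partial\Omega}$ into $\Omega$ produces $\tilde\psi\in H^2(\Omega)$ whose own trace and normal-derivative trace on $\partial\Omega$ match those of $\psi$, with continuity $\|\tilde\psi\|_{2,\Omega}\lesssim\|\upsi\|_{1,B}$. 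Setting $\uphi:=\upsi|_\Omega-\nabla\tilde\psi$, the trace of $\uphi$ vanishes on $\partial\Omega$ so $\uphi\in\undertilde{H}{}^1_0(\Omega)$, while $\curl\uphi=\curl\upsi=\curl\ueta$ since $\curl\nabla\tilde\psi=0$; concatenating the continuity estimates delivers $\|\uphi\|_{1,\Omega}\lesssim\|\curl\ueta\|_{0,\Omega}\leqslant\|\ueta\|_{\curl,\Omega}$.

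I expect the trace-correction step to be the main obstacle: on a polyhedral Lipschitz boundary, justifying that $\upsi|_{\partial\Omega}$ is genuinely the trace of a gradient, and building a stable $H^2$ lifting matching both Dirichlet and normal-derivative data, uses the simply-connected topology together with somewhat technical Lipschitz-domain trace theory. A cleaner route, essentially the content of the cited works of Pasciak--Zhao and Hiptmair--Xu, is to invoke directly the \emph{regular decomposition} $H_0(\curl,\Omega)=\undertilde{H}{}^1_0(\Omega)+\nabla H^1_0(\Omega)$ with continuous splitting constants, decompose $\ueta=\uphi+\nabla p$ accordingly, and read off $\curl\uphi=\curl\ueta$ for free.
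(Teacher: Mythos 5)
The paper does not prove this lemma at all: it is quoted verbatim from \cite{Pasciak.J;Zhao.J2002,Hiptmair.R;Xu.J2007}, and the regular decomposition $H_0(\curl,\Omega)=\nabla H^1_0(\Omega)+\undertilde{H}{}^1_0(\Omega)$ that you mention in your closing paragraph is likewise stated two lemmas later without proof. So your final remark --- that the ``clean route'' is simply to cite the regular decomposition and read off $\curl\uphi=\curl\ueta$ --- is exactly what the paper does, and had you stopped there you would have matched its approach. What you add is an actual construction, and its skeleton is sound: $\curl\ueta$ is divergence-free with vanishing weak normal trace (by testing against $\nabla\varphi$, $\varphi\in H^1(\Omega)$, using $\nu\times\ueta=0$), so its zero extension lies in $H(\dv,B)$ for a ball $B\supset\overline\Omega$, and the Girault--Raviart vector-potential theorem on the contractible set $B$ gives $\upsi\in\undertilde{H}{}^1(B)$ with the bound $\|\upsi\|_{1,B}\lesssim\|\curl\ueta\|_{0,\Omega}$; this even yields the sharper estimate $\|\uphi\|_{1,\Omega}\lesssim\|\curl\ueta\|_{0,\Omega}$.

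The one step that is not actually carried out is the trace correction, and as written it leans on a fact you do not justify: that the Cauchy pair $(\psi,\partial_\nu\psi)|_{\partial\Omega}$ of the exterior potential belongs to the range of the $H^2(\Omega)$ trace map on a \emph{polyhedral} boundary. On such boundaries that range is characterized by nontrivial edge compatibility conditions, so asserting a ``bounded $H^2$-lifting'' is precisely the point at issue, not a routine invocation. The gap is repairable without that machinery: since $\nabla\psi=\upsi\in\undertilde{H}{}^1(B\setminus\overline\Omega)$, one has $\psi\in H^2(B\setminus\overline\Omega)$ (normalize $\psi$ to zero mean so that $\|\psi\|_{2,B\setminus\overline\Omega}\lesssim\|\upsi\|_{1,B}$), and a Calder\'on--Stein extension $E\psi\in H^2(B)$ of $\psi$ across the Lipschitz boundary gives $\nabla(E\psi)\in\undertilde{H}{}^1(B)$ whose trace on $\partial\Omega$ from inside coincides with $\upsi|_{\partial\Omega}$; setting $\uphi:=(\upsi-\nabla E\psi)|_\Omega$ then lands in $\undertilde{H}{}^1_0(\Omega)$ with $\curl\uphi=\curl\ueta$ and the required bound. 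You should also state explicitly where simple connectedness (and connectedness of $\partial\Omega$) enters, namely in guaranteeing that the first de~Rham cohomology of $B\setminus\overline\Omega$ vanishes so that the curl-free field $\upsi$ is a gradient there; ``the relevant component is simply connected'' is the right idea but deserves the Alexander-duality justification.
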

\begin{lemma}\label{lem:ficurl}(Friedrichs Inequality, c.f. \cite{Boffi.D;Brezzi.F;Fortin.M2013})
There exists a constant $C$, such that it holds for $\uv\in N_0(\curl,\Omega)$ that
\begin{equation}
\|\uv\|_{0,\Omega}\leqslant C\|\curl\uv\|_{0,\Omega}.
\end{equation}
\end{lemma}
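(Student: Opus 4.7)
The plan is to prove this by a standard compactness–contradiction argument. A purely algebraic reduction via Lemma \ref{lem:curlsur} will not by itself suffice: that lemma only furnishes $\uphi\in\undertilde{H}{}^1_0(\Omega)$ with $\curl\uphi=\curl\uv$ and $\|\uphi\|_{1,\Omega}\lesssim\|\uv\|_{\curl,\Omega}$, an estimate too weak to close the sought bound without compactness entering somewhere.

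The key ingredient I would establish first is that the embedding $N_0(\curl,\Omega)\hookrightarrow\undertilde{L}^2(\Omega)$ is compact. Observe that any $\uv\in N_0(\curl,\Omega)$ satisfies $\dv\uv=0$ distributionally, because $(\uv,\nabla s)=0$ for all $s\in H^1_0(\Omega)$. Thus $N_0(\curl,\Omega)$ is continuously included in $\{\utau\in H_0(\curl,\Omega):\dv\utau=0\}$, and Weber's compactness theorem on a Lipschitz polyhedron gives that this larger space embeds compactly into $\undertilde{L}^2(\Omega)$. Granted this, suppose the inequality fails; then there is $\{\uv_n\}\subset N_0(\curl,\Omega)$ with $\|\uv_n\|_{0,\Omega}=1$ and $\|\curl\uv_n\|_{0,\Omega}\to 0$. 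The sequence is bounded in $H_0(\curl,\Omega)$, so a subsequence converges strongly in $\undertilde{L}^2(\Omega)$ to some $\uv_\infty$ with $\|\uv_\infty\|_{0,\Omega}=1$. Closedness of $\curl$ on $H_0(\curl,\Omega)$ and $\undertilde{L}^2$-continuity of the functionals $(\cdot,\nabla s)$ both pass to the limit, so $\uv_\infty\in N_0(\curl,\Omega)$ with $\curl\uv_\infty=\undertilde{0}$.

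To conclude, I would invoke the topology of $\Omega$. Since $\curl\uv_\infty=\undertilde{0}$ in the simply connected $\Omega$, there exists $\phi\in H^1(\Omega)$ with $\uv_\infty=\nabla\phi$. The vanishing tangential trace built into $H_0(\curl,\Omega)$ forces the tangential gradient of $\phi$ on $\partial\Omega$ to vanish, so $\phi$ is constant on each connected component of $\partial\Omega$; as the boundary of a simply connected polyhedron is connected, $\phi$ may be chosen in $H^1_0(\Omega)$. The defining orthogonality of $N_0(\curl,\Omega)$ then yields $\|\uv_\infty\|_{0,\Omega}^2=(\uv_\infty,\nabla\phi)=0$, contradicting $\|\uv_\infty\|_{0,\Omega}=1$. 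I expect the main obstacle to be the compactness step: Weber's theorem on a merely Lipschitz polyhedral domain is not a soft corollary of Rellich and rests on a delicate Hodge-type decomposition, whereas the remaining ingredients (weak-to-strong passage to the limit, tangential trace theory, and the topological Poincaré-type argument) are standard.
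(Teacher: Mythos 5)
The paper does not actually prove this lemma; it quotes it from \cite{Boffi.D;Brezzi.F;Fortin.M2013}, so there is no in-paper argument to compare against. Your compactness--contradiction proof is the standard one found in the cited references, and its analytic steps are sound: every $\uv\in N_0(\curl,\Omega)$ is divergence-free, Weber's theorem gives the compact embedding of $H_0(\curl,\Omega)\cap H(\dv,\Omega)$ into $\undertilde{L}{}^2(\Omega)$ on a Lipschitz polyhedron, the closedness of $\curl$ and the $L^2$-continuity of the constraints pass to the limit, and the limit field is killed by the defining orthogonality once it is written as $\nabla\phi$ with $\phi\in H^1_0(\Omega)$. You are also right that Lemma \ref{lem:curlsur} as stated (with $\|\ueta\|_{\curl,\Omega}$ on the right) is too weak for a purely algebraic proof; for the record, the sharper form in \cite{Hiptmair.R;Xu.J2007}, namely $\|\uphi\|_{1,\Omega}\leqslant C\|\curl\ueta\|_{0,\Omega}$, does yield a compactness-free proof via $\|\uv\|_{0,\Omega}^2=(\uv,\uphi)\leqslant\|\uv\|_{0,\Omega}\|\uphi\|_{0,\Omega}$.

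The one genuinely wrong assertion is that ``the boundary of a simply connected polyhedron is connected.'' It is not: a cube with a concentric cubical cavity removed is simply connected but has two boundary components. On such a domain your argument breaks at the last step --- $\phi$ is only locally constant on $\partial\Omega$ and need not lie in $H^1_0(\Omega)$ --- and in fact the inequality itself fails there, because the gradient of the harmonic function equal to $0$ on one boundary component and $1$ on the other is a nonzero element of $N_0(\curl,\Omega)$ with vanishing curl. The hypothesis actually needed for this $H_0(\curl)$-type Friedrichs inequality is connectedness of $\partial\Omega$, not simple connectedness of $\Omega$ (the latter is the right hypothesis for the dual, $H_0(\dv)$-type inequality). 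So your proof is correct precisely under the reading of ``simply connected polyhedron'' that the paper implicitly intends, namely one with connected boundary; that assumption should be stated rather than derived.
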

\begin{lemma}(regular decomposition of $H_0(\curl,\Omega)$)
Given $\ueta\in H_0(\curl,\Omega)$, there exists a $w\in H^1_0(\Omega)$ and $\uphi\in\undertilde{H}{}^1_0(\Omega)$, such that 
$\ueta=\nabla w+\uphi$ and $\|\ueta\|_{\curl,\Omega}\geqslant C(\|w\|_{1,\Omega}+\|\uphi\|_{1,\Omega})$.
\end{lemma}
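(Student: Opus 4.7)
The plan is to apply the lifting result of Lemma \ref{lem:curlsur} and then handle the resulting curl-free remainder via a scalar potential. First I would invoke Lemma \ref{lem:curlsur} to extract $\uphi\in\undertilde{H}{}^1_0(\Omega)$ with $\curl\uphi=\curl\ueta$ and $\|\uphi\|_{1,\Omega}\leqslant C\|\ueta\|_{\curl,\Omega}$. Setting $\uzeta:=\ueta-\uphi$, one has $\uzeta\in H_0(\curl,\Omega)$ with $\curl\uzeta=\undertilde{0}$, so that $\uzeta$ is curl-free and lies in $L^2(\Omega)$.

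Since $\Omega$ is simply connected, every curl-free field in $\undertilde{L}{}^2(\Omega)$ admits a scalar potential: there exists $\tilde w\in H^1(\Omega)$ with $\nabla\tilde w=\uzeta$ and $\|\nabla\tilde w\|_{0,\Omega}=\|\uzeta\|_{0,\Omega}$. Next I would promote $\tilde w$ to $H^1_0(\Omega)$. The condition $\ueta\in H_0(\curl,\Omega)$ means $\ueta\times\undertilde{n}=\undertilde{0}$ on $\Gamma$, and $\uphi\in\undertilde{H}{}^1_0(\Omega)$ vanishes on $\Gamma$, so the tangential trace of $\nabla\tilde w$ vanishes on $\Gamma$. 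Hence $\tilde w$ is locally constant on $\Gamma$; as $\Omega$ is a simply connected polyhedron its boundary is connected, so $\tilde w\equiv c$ on $\Gamma$ for a single constant $c$. Setting $w:=\tilde w-c\in H^1_0(\Omega)$ gives $\nabla w=\uzeta$, so $\ueta=\nabla w+\uphi$ as required.

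Finally I would collect norm estimates. One has
\begin{equation*}
\|\nabla w\|_{0,\Omega}=\|\ueta-\uphi\|_{0,\Omega}\leqslant \|\ueta\|_{0,\Omega}+\|\uphi\|_{1,\Omega}\leqslant C\|\ueta\|_{\curl,\Omega},
\end{equation*}
and the Poincar\'e inequality on $H^1_0(\Omega)$ yields $\|w\|_{0,\Omega}\leqslant C\|\nabla w\|_{0,\Omega}$. Combined with the bound on $\|\uphi\|_{1,\Omega}$ already supplied by Lemma \ref{lem:curlsur}, this gives $\|w\|_{1,\Omega}+\|\uphi\|_{1,\Omega}\leqslant C\|\ueta\|_{\curl,\Omega}$.

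The main obstacle is the step producing $w\in H^1_0(\Omega)$ rather than merely $H^1(\Omega)$: one must verify that the tangential-trace-zero condition on $\nabla\tilde w$ forces $\tilde w$ to be a single constant on the whole boundary. This is precisely where the simple-connectedness of the polyhedron $\Omega$ (and hence the connectedness of $\Gamma$) is used; without it, one would only obtain piecewise constants on each boundary component and could not subtract a single scalar to enter $H^1_0(\Omega)$.
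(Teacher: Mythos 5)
Your argument is the standard derivation of this regular decomposition from Lemma \ref{lem:curlsur}, and it is essentially correct; the paper states the lemma without proof, and lifting $\curl\ueta$ to $\uphi\in\undertilde{H}{}^1_0(\Omega)$ and then writing the curl-free, tangential-trace-free remainder as $\nabla w$ with $w\in H^1_0(\Omega)$ is exactly the intended route, with the norm bounds assembled correctly. One caution on the step you yourself flag as the crux: simple connectedness of $\Omega$ controls $\pi_1$ and hence guarantees that the curl-free remainder is a gradient of some $\tilde w\in H^1(\Omega)$, but it does \emph{not} imply that $\Gamma$ is connected (a polyhedron with an interior cavity is simply connected yet has two boundary components), so your parenthetical ``and hence the connectedness of $\Gamma$'' is a non sequitur; the subtraction of a single constant genuinely requires connectedness of $\Gamma$, which is the tacit standing assumption in this setting (and with a disconnected boundary one would instead absorb the locally constant boundary values into the $\undertilde{H}{}^1_0(\Omega)$ part by a smooth cutoff near each component). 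Apart from that conflation the proof is sound.
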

\begin{proposition}
\begin{equation}\label{eq:infsuphhdiv}
\inf_{\ueta\in N_0(\curl,\Omega)\setminus\{\mathbf{0}\}}\sup_{\uphi\in\undertilde{H}{}^1_0(\Omega)}\frac{(\curl\uphi,\curl\ueta)}{\|\uphi\|_{1,\Omega}\|\curl\ueta\|_{0,\Omega}}
=
\inf_{\utau\in\mathring{H}_0(\dv,\Omega)\setminus\{\mathbf{0}\}}\sup_{\uphi\in\undertilde{H}{}^1_0(\Omega)}\frac{(\curl\uphi,\utau)}{\|\uphi\|_{1,\Omega}\|\utau\|_{0,\Omega}}\geqslant C>0.
\end{equation}
\end{proposition}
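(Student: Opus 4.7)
The plan is to prove the equality of the two inf-sup quantities and their uniform lower bound separately.

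For the equality, the key observation is that $\curl$ induces a bijection from $N_0(\curl,\Omega)$ onto $\mathring{H}_0(\dv,\Omega)$ that preserves the $L^2$ norm on its image. Injectivity on $N_0(\curl,\Omega)$ follows from the Friedrichs inequality of Lemma \ref{lem:ficurl}. That $\curl\ueta\in\mathring{H}_0(\dv,\Omega)$ whenever $\ueta\in H_0(\curl,\Omega)$ is seen by integration by parts: for any $s\in H^1(\Omega)$, $(\curl\ueta,\nabla s)=(\ueta,\curl\nabla s)=0$, which together with $\dv\curl\ueta=0$ means $\curl\ueta\in H_0(\dv,\Omega)$ with vanishing divergence. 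Surjectivity onto $\mathring{H}_0(\dv,\Omega)$ is the topological input: the exactness of the homogeneous $L^2$ de Rham complex $H^1_0(\Omega)\to H_0(\curl,\Omega)\to\mathring{H}_0(\dv,\Omega)$ on the simply connected polyhedron $\Omega$ guarantees that any divergence-free field in $H_0(\dv,\Omega)$ is the curl of some $\ueta\in H_0(\curl,\Omega)$, and projecting off its $\nabla H^1_0(\Omega)$ component places the preimage in $N_0(\curl,\Omega)$ without altering the curl. With this bijection and the tautological $\|\curl\ueta\|_{0,\Omega}=\|\utau\|_{0,\Omega}$, substituting $\utau=\curl\ueta$ in the second ratio turns it into the first, so the two infima coincide.

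For the uniform lower bound, fix $\ueta\in N_0(\curl,\Omega)\setminus\{\mathbf{0}\}$ and test with the particular $\uphi$ supplied by Lemma \ref{lem:curlsur}, which satisfies $\curl\uphi=\curl\ueta$ and $\|\uphi\|_{1,\Omega}\lesssim\|\ueta\|_{\curl,\Omega}$. Lemma \ref{lem:ficurl} upgrades this to $\|\uphi\|_{1,\Omega}\lesssim\|\curl\ueta\|_{0,\Omega}$, while the numerator equals $\|\curl\ueta\|_{0,\Omega}^2$, so the ratio is bounded below uniformly in $\ueta$.

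The main obstacle is the surjectivity step invoked in the bijection: it is not a direct calculation but a topological fact, relying essentially on the simply connected hypothesis on $\Omega$ to force the exactness of the $L^2$ de Rham complex with homogeneous boundary conditions. Everything else is a direct application of Lemma \ref{lem:curlsur} together with Lemma \ref{lem:ficurl}.
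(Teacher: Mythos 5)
Your proof is correct and follows exactly the route the paper intends: the paper states this proposition without writing out a proof, but the uniform lower bound is precisely the combination of Lemma \ref{lem:curlsur} with the Friedrichs inequality of Lemma \ref{lem:ficurl}, and the equality of the two infima is the standard fact (from \cite{GiraultRaviart1986}, using simple connectivity) that $\curl$ maps $N_0(\curl,\Omega)$ bijectively onto $\mathring{H}_0(\dv,\Omega)$. Your identification of the surjectivity of $\curl$ onto $\mathring{H}_0(\dv,\Omega)$ as the only nontrivial (topological) ingredient is accurate.
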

\subsection{Decoupled formulations}
According to Section \ref{subsec:decform}, we can present the decoupled formulations of \eqref{eq:primalbl} as below. 
\paragraph{ Decouple formulation \bf A} \eqref{eq:primalbl} can be decoupled to the system below:
\begin{enumerate}
\item find $r\in H^1_0(\Omega)$, such that
\begin{equation}\label{eq:probI-1}
(\nabla r,\nabla v)=-\langle f_1,v\rangle,\forall\,v\in H^1_0(\Omega);
\end{equation}
\item with $r$ solved out, find $(\uphi,\uzeta,p)\in \undertilde{H}{}^1_0(\Omega)\times H_0(\curl,\Omega)\times H^1_0(\Omega)$, such that, for $(\upsi,\ueta,q)\in  \undertilde{H}{}^1_0(\Omega)\times H_0(\curl,\Omega)\times H^1_0(\Omega)$,
\begin{equation}\label{eq:probI-2}
\left\{
\begin{array}{ccclll}
(\alpha\dv\uphi,\dv\upsi)+(\curl\uphi,\curl\upsi)&&(\curl\uzeta,\curl\upsi)&=&\langle\uf{}_2,\upsi\rangle-(\nabla r,\upsi),&
\\
&&(\uzeta,\nabla q) &=&0
\\
(\curl\uphi,\curl\ueta)&+(\ueta,\nabla p)&&=&0;&
\end{array}
\right.
\end{equation}
\item with $\uphi$ solved out, find $u\in H^1_0(\Omega)$, such that
\begin{equation}\label{eq:probI-3}
(\nabla u,\nabla s)=(\uphi,\nabla s),\ \forall\,s\in H^1_0(\Omega).
\end{equation}
\end{enumerate}

\paragraph{Decoupled formulation \bf B} \eqref{eq:primalbl} can be decoupled to the system below:
\begin{enumerate}
\item find $r\in H^1_0(\Omega)$, such that
\begin{equation}\label{eq:probII-1}
(\nabla r,\nabla v)=-\langle f_1,v\rangle,\forall\,v\in H^1_0(\Omega);
\end{equation}
\item with $r$ solved out, find $(\uphi,\uzeta)\in \undertilde{H}{}^1_0(\Omega)\times H_0(\curl,\Omega)$, such that, for $(\upsi,\ueta)\in  \undertilde{H}{}^1_0(\Omega)\times H_0(\curl,\Omega)$,
\begin{equation}\label{eq:probII-2}
\left\{
\begin{array}{ccclll}
(\alpha\dv\uphi,\dv\upsi)+(\curl\uphi,\curl\upsi)&&(\curl\uzeta,\curl\upsi)&=&\langle\uf{}_2,\upsi\rangle-(\nabla r,\upsi),&
\\
(\curl\uphi,\curl\ueta)&&&=&0;&
\end{array}
\right.
\end{equation}
\item with $\uphi$ solved out, find $u\in H^1_0(\Omega)$, such that
\begin{equation}\label{eq:probII-3}
(\nabla u,\nabla s)=(\uphi,\nabla s),\ \forall\,s\in H^1_0(\Omega).
\end{equation}
\end{enumerate}

\begin{theorem}\label{thm:regconv}
Given $f_1\in H^{-1}(\Omega)$ and $\uf{}_2\in \undertilde{H}{}^{-1}(\Omega)$, Problem \textbf A admits a unique solution $r\sim(\uphi,\uzeta,0)\sim u$, and $u$ solves the problem \eqref{eq:primalbl} and $r\sim(\uphi,\uzeta)\sim u$ solves Problem \textbf{B}.  Further, 
\begin{enumerate}
\item if $\Omega$ is a convex polyhedron, then,
\begin{enumerate}
\item if $f_1\in L^2(\Omega)$, then $\|r\|_{2,\Omega}\cequiv \|f_1\|_{0,\Omega}$;
\item if $\alpha$ is smooth on $\Omega$ and $\uf{}_2\in \undertilde{L}{}^2(\Omega)$, then
\begin{equation}
\|r\|_{1,\Omega}+\|\uzeta\|_{1,\Omega}+\|\curl\uzeta\|_{1,\Omega}+\|\uphi\|_{2,\Omega}+\|u\|_{3,\Omega}\cequiv \|f_1\|_{-1,\Omega}+\|\uf{}_2\|_{0,\Omega};
\end{equation}
\end{enumerate}
\item if $\hat r\sim(\hat\uphi,\hat\uzeta)\sim\hat u$ is another solution of Problem \textbf{B}, then $\hat r=r$, $\hat\uphi=\uphi$ and $\hat u=u$.
\end{enumerate}
\end{theorem}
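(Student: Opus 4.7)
The plan is to instantiate the abstract decoupled formulation of Section \ref{sec:abs} and then layer classical regularity results on top. I would take $R = H^1_0(\Omega)$, $Y = \undertilde{H}{}^1_0(\Omega)$, $B = \nabla$, and ground space $H = \undertilde{L}{}^2(\Omega)$; the regular decomposition lemma then identifies $\Sigma = BR+Y$ with $H_0(\curl,\Omega)$ up to equivalent norms. Choosing $c$ as the $H(\curl)$ inner product and $d(\uzeta,\upsi) = -(\curl\uzeta,\curl\upsi)$, with $a_R = 0$, $b = 0$, and $a_Y(\uphi,\upsi) = (\alpha\dv\uphi,\dv\upsi)+(\curl\uphi,\curl\upsi)$, all the ingredients are in place: Hypothesis 0 follows from $H^2_0(\Omega) = \{v\in H^1_0 : \nabla v \in \undertilde{H}{}^1_0\}$, Assumption II is immediate since $\nabla$ is injective on $H^1_0$, and Property I is the inf-sup inequality \eqref{eq:infsuphhdiv} combined with Friedrichs (Lemma \ref{lem:ficurl}) to equate $\|\cdot\|_\Sigma$ with $\|\curl\cdot\|_0$ on $(BR)^\perp = N_0(\curl,\Omega)$. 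The main abstract theorem of Section \ref{sec:abs} then delivers existence, uniqueness, and primal equivalence for Problem A; testing the third equation of \eqref{eq:probI-2} against $\ueta = \nabla q$ forces $p = 0$, so the same $(r,\uphi,\uzeta)$ also solves Problem B.

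For part (1a), standard Poisson $H^2$-regularity on a convex polyhedron applied to $-\Delta r = -f_1$ gives $\|r\|_2 \cequiv \|f_1\|_0$. For part (1b), $\|r\|_1 \lesssim \|f_1\|_{-1}$ is automatic. Rewriting the primal problem as $\Delta(\alpha\Delta u) = f_1 - \dv\uf{}_2 \in H^{-1}(\Omega)$ with clamped data and invoking $H^3$-regularity for the bi-Laplacian on a convex polyhedron with smooth coefficient gives $\|u\|_3 \lesssim \|f_1\|_{-1} + \|\uf{}_2\|_0$; since $\uphi = \nabla u$ this also bounds $\|\uphi\|_2$. From the second line of \eqref{eq:probI-2}, $\dv\uzeta = 0$ in the weak sense, so Lemma \ref{lem:capreg} gives $\uzeta \in \undertilde{H}{}^1$ with $\|\uzeta\|_1 \lesssim \|\curl\uzeta\|_0$. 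Integrating by parts the $\dv$-term in the first line of \eqref{eq:probI-2} (now legal because $\alpha\Delta u \in H^1$) identifies $\curl\curl\uzeta = \uf{}_2 - \nabla r + \nabla(\alpha\Delta u) \in \undertilde{L}{}^2$. A face-by-face trace computation using $\uzeta\times\nu = 0$ and $\uzeta \in \undertilde{H}{}^1$ yields $(\curl\uzeta)\cdot\nu = 0$ on $\partial\Omega$, so $\curl\uzeta \in H_0(\dv)\cap H(\curl)$, and one last application of Lemma \ref{lem:capreg} gives $\curl\uzeta \in \undertilde{H}{}^1$ with the claimed bound.

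For the uniqueness in part (2), let $(\uphi_d,\uzeta_d)$ be the difference of two Problem B solutions (the $r$-component being unique from Poisson). Testing the second equation of \eqref{eq:probII-2} against $\ueta = \uphi_d \in \undertilde{H}{}^1_0 \subset H_0(\curl)$ gives $\curl\uphi_d = 0$; testing the first equation against $\upsi = \uphi_d$ then produces $\int_\Omega\alpha|\dv\uphi_d|^2 = 0$, so $\dv\uphi_d = 0$. The identity $\|\nabla\uphi_d\|_0^2 = \|\curl\uphi_d\|_0^2 + \|\dv\uphi_d\|_0^2$ valid on $\undertilde{H}{}^1_0$ then forces $\uphi_d = 0$; \eqref{eq:probII-3} pins down $\hat u = u$, while $\hat\uzeta$ is determined only modulo $\nabla H^1_0$. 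I expect the main obstacle to be the invocation of $H^3$ bi-Laplacian regularity on a convex polyhedron with $H^{-1}$ data in part (1b), a delicate classical result I would cite as a black box; a secondary subtlety is the trace identity $(\curl\uzeta)\cdot\nu = 0$, verified face-by-face in local coordinates where $\nu$ is constant and the tangential components of $\uzeta$ vanish.
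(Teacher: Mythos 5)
Your proposal is correct and follows essentially the same route as the paper: instantiate the abstract framework of Section \ref{sec:abs} with $R=H^1_0(\Omega)$, $Y=\undertilde{H}{}^1_0(\Omega)$, $B=\nabla$, $\Sigma\cong H_0(\curl,\Omega)$, and then prove 1(b) by exactly the paper's chain --- $\dv\uzeta=0$ gives $\uzeta\in H_0(\curl,\Omega)\cap H(\dv,\Omega)\subset\undertilde{H}{}^1(\Omega)$, the first equation identifies $\curl\curl\uzeta\in\undertilde{L}{}^2(\Omega)$ so that $\curl\uzeta\in H(\curl,\Omega)\cap H_0(\dv,\Omega)\subset\undertilde{H}{}^1(\Omega)$ --- with the fourth-order and Poisson regularity on convex polyhedra cited as black boxes, just as the paper does. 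The only differences are presentational: you spell out the verification of \textbf{Hypothesis 0} and \textbf{Property I} and give a direct uniqueness argument for Problem \textbf{B} (via $\curl\uphi_d=\dv\uphi_d=0$), steps the paper delegates to its abstract theorems without further comment.
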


\begin{proof}
We only have to show the regularity estimate 1(b). Given $f_1\in H^{-1}(\Omega)$ and $\uf{}_2\in\undertilde{L}^2(\Omega)$, by the regularity of fourth order problem, $\|u\|_{3,\Omega}\cequiv \|\uphi\|_{2,\Omega}\lesssim \|f_1\|_{-1,\Omega}+\|\uf{}_2\|_{0,\Omega}$. As $(\uzeta,\nabla q)=0$ for any $q\in H^1_0(\Omega)$, we have $\dv\uzeta=0$, and thus $\uzeta\in H_0(\curl,\Omega)\cap H(\dv,\Omega)\subset \undertilde{H}{}^1(\Omega)$. Besides, $(\curl\uzeta,\curl\upsi)=(\uf{}_2-\nabla\alpha\nabla\cdot \uphi-\nabla r,\upsi)$ for any $\upsi\in\undertilde{H}{}^1_0(\Omega)$, we thus obtain $\curl\curl\uzeta=\uf{}_2-\nabla\alpha\nabla\cdot \uphi-\nabla r\in \undertilde{L}{}^2(\Omega)$. Since $\curl\uzeta\subset H_0(\dv,\Omega)$, we obtain further $\curl\uzeta\in H(\curl,\Omega)\cap H_0(\dv,\Omega)\subset\undertilde{H}{}^1(\Omega)$. The remaining of the lemma follows by the regularity of Poisson equation. The proof is completed.
\end{proof}


%
%
%
\section{Finite element discretisation}
\label{sec:fem}

In this section, we choose respective finite element subspaces of $H^1_0(\Omega)$, $\undertilde{H}{}^1_0(\Omega)$ and $H_0(\curl,\Omega)$ and use them to replace the Sobolev spaces in the mixed system to generate a discretisation. 

\subsection{Subdivision and finite elements}

For $K$ a tetrahedron with $a_i$, $i=1:4$, the vertices, denote by  $F_i$ the corresponding opposite faces. The barycentre coordinates are denoted as usual by $\lambda_i$, $i=1,2,3,4$.
Denote $q_0=\lambda_1\lambda_2\lambda_3\lambda_4$, and $q_i=q_0/\lambda_i$, $i=1,2,3,4$. Obviously, $q_0$ vanishes on the faces of $K$, and $q_i$ vanishes on the faces of $K$ other than $F_i$. As usual, we use $P_k(F)$ for the set of polynomials on a face $F$ of degrees not higher than $k$, $P_k(K)$ for the set of polynomials on $K$ of degrees not higher than $k$, and $\hat{P}_{k}(K)$ is the space of homogeneous $k$-th degree polynomials. Besides, denote shape function spaces:
\begin{itemize}
\item $
P_k^{i}(K):={\rm span}\{\lambda_1^{\alpha_1}\lambda_2^{\alpha_2}\lambda_3^{\alpha_3}\lambda_4^{\alpha_4}:\alpha_i=0,\ \sum_{j=1}^4\alpha_j=k\}
$
for any 4-index $(\alpha_1,\alpha_2,\alpha_3,\alpha_4)$;

\item $BF_{k}^i(K):=\{pq_i:p\in P_{k}^i(K)\}$, $BB_k(K):=\{pq_0:p\in P_k(K)\}$;
\item $\mathfrak{P}_k(K):=P_k(K)+\sum_{i=1}^4BF_{k-1}^i(K)+BB_{k-2}(K)$;
\item $\mathbb{E}_k(K):=\{\uu+\uv\times \ux:\uu\in (P_{k-1}(K))^3,\ \uv\in (\hat{P}_{k-1}(K))^3\}$.
\end{itemize}
Evidently, $BF_{k}^i(K)\subset P_{k+3}(K)$, $BB_k(K)\subset P_{k+4}(K)$, $P_k^i(K)\subsetneq P_k(K)$, and $P_k^i(K)|_{F_i}=P_k(F_i)=P_k(K)|_{F_i}$. 

Let $\mathcal{G}_h$ be a tetrahedron subdivision of $\Omega$, such that $\bar\Omega=\cup_{K\in\mathcal{G}_h}\bar K$. Denote by $\mathcal{F}_h$, $\mathcal{F}_h^i$, $\mathcal{E}_h$, $\mathcal{E}_h^i$, $\mathcal{X}_h$ and $\mathcal{X}_h^i$ respectively the set of faces, interior faces, edges, interior edges, vertices and interior vertices.  Denote by $P_k(\mathcal{G}_h)$ the space of piecewise $k$-th degree polynomials on $\mathcal{G}_h$.
Associatedly, define finite element spaces by
\begin{itemize}
\item $L_h^k:=\{w\in H^1(\Omega):w|_K\in P_k(K),\ \forall\,K\in \mathcal{G}_h\}$, and $L_{h0}^k=L_h^k\cap H^1_0(\Omega)$;

\item $\mathfrak{L}_h^k:=\{w\in H^1(\Omega):w|_K\in \mathfrak{P}_k(K),\ \forall\,K\in \mathcal{G}_h\}$, and $\mathfrak{L}^k_{h0}=\mathfrak{L}^k_h\cap H^1_0(\Omega)$;
\item  $\mathbb{N}_h^k:=\{\uw\in H(\curl,\Omega):\uw|_K\in \mathbb{E}_k(K),\ \forall\,K\in \mathcal{G}_h\}$, and $\mathbb{N}_{h0}^k=\mathbb{N}_h^k\cap H_0(\curl,\Omega)$.
\end{itemize}

It is well known that (\cite{Nedelec1980}) $\{\ueta\in\mathbb{N}_{h0}^k:\curl\ueta=\undertilde{0}\}=\nabla L^k_{h0}.$ Thus denote $N^k_{h0}(\curl):=\{\ueta\in \mathbb{N}^k_{h0}:(\ueta,\nabla v)=0,\ \forall\,v\in L^k_{h0}\}$. Namely, $ N_{h0}^k(\curl)$ is the orthogonal completion of $(\nabla L^k_{h0})$ in $\mathbb{N}^k_{h0}$ in $L^2$ inner product and also $H(\curl,\Omega)$ inner product.

\begin{lemma}\label{lem:ficurldis}(Discretized Friedrichs Inequality, c.f. \cite{Kikuchi.F1989,Hiptmair.R2002,AFW2006})
There exists a constant $C$, such that $\|\uv{}_h\|_{0,\Omega}\leqslant C\|\curl\uv{}_h\|_{0,\Omega}$ for $\uv{}_h\in \mathbb{N}_{h0}^k(\curl)$.
\end{lemma}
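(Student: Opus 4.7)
The plan is to reduce the discrete Friedrichs inequality to its continuous counterpart (Lemma \ref{lem:ficurl}) by producing, for each $\uv_h\in\mathbb{N}^k_{h0}(\curl)$, an $\undertilde{H}{}^1_0$-lift of its curl whose $H^1$-norm is controlled solely by $\|\curl\uv_h\|_{0,\Omega}$, and then interpolating that lift back into $\mathbb{N}^k_{h0}$ through a commuting quasi-interpolation. The key structural input is that $\curl$ maps $\mathbb{N}^k_{h0}$ onto the corresponding BDM/RT-type discrete $H_0(\dv)$-conforming space, so a commuting interpolant will reproduce $\curl\uv_h$ exactly, and the resulting difference will be a discrete gradient against which $\uv_h$ is orthogonal by definition of $\mathbb{N}^k_{h0}(\curl)$.

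Concretely, I would first use the continuous Helmholtz decomposition $\uv_h=\nabla q+\ueta_0$ with $q\in H^1_0(\Omega)$ and $\ueta_0\in N_0(\curl,\Omega)$; then $\curl\ueta_0=\curl\uv_h$ and, by Lemma \ref{lem:ficurl}, $\|\ueta_0\|_{0,\Omega}\leqslant C\|\curl\uv_h\|_{0,\Omega}$. Applying Lemma \ref{lem:curlsur} to $\ueta_0$ yields $\uphi\in\undertilde{H}{}^1_0(\Omega)$ with $\curl\uphi=\curl\uv_h$ and $\|\uphi\|_{1,\Omega}\leqslant C\|\ueta_0\|_{\curl,\Omega}\leqslant C\|\curl\uv_h\|_{0,\Omega}$. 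Next, let $\Pi_h^{\curl}:\undertilde{H}{}^1_0(\Omega)\to\mathbb{N}^k_{h0}$ be a commuting quasi-interpolation of Sch\"oberl/Falk--Winther type, with $\|\Pi_h^{\curl}\uphi\|_{0,\Omega}\leqslant C\|\uphi\|_{1,\Omega}$ and $\curl\Pi_h^{\curl}=\Pi_h^{\dv}\curl$ for the companion divergence-conforming projection $\Pi_h^{\dv}$. Since $\curl\uphi=\curl\uv_h$ already lies in the range of $\curl$ on $\mathbb{N}^k_{h0}$, it is fixed by $\Pi_h^{\dv}$, so $\curl(\uv_h-\Pi_h^{\curl}\uphi)=0$. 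By the discrete exact sequence property $\{\ueta\in\mathbb{N}^k_{h0}:\curl\ueta=\undertilde{0}\}=\nabla L^k_{h0}$ noted just above Lemma \ref{lem:ficurldis}, there exists $w_h\in L^k_{h0}$ with $\uv_h-\Pi_h^{\curl}\uphi=\nabla w_h$. The defining orthogonality of $\mathbb{N}^k_{h0}(\curl)$ then annihilates the gradient piece, giving
\[
\|\uv_h\|_{0,\Omega}^2=(\uv_h,\Pi_h^{\curl}\uphi)+(\uv_h,\nabla w_h)=(\uv_h,\Pi_h^{\curl}\uphi)\leqslant C\|\uv_h\|_{0,\Omega}\|\uphi\|_{1,\Omega}\leqslant C\|\uv_h\|_{0,\Omega}\|\curl\uv_h\|_{0,\Omega},
\]
and cancellation of one factor of $\|\uv_h\|_{0,\Omega}$ yields the claim.

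The main obstacle is the availability of a commuting quasi-interpolation $\Pi_h^{\curl}$ with a uniform $\undertilde{H}{}^1\to\undertilde{L}{}^2$ bound and the required curl-commuting property on all of $\undertilde{H}{}^1_0$: the classical N\'ed\'elec edge-moment interpolant is not well defined on $\undertilde{H}{}^1$ in three dimensions because edge and face traces lack the necessary regularity, so one must appeal to a smoothed or regularised construction. I would simply invoke the standard constructions from \cite{Hiptmair.R2002,AFW2006} rather than reproduce their (technical) proofs of stability and commutativity. A wholly self-contained alternative would be Kikuchi's compactness argument \cite{Kikuchi.F1989}, but it produces only an abstract constant $C$ without any structural insight and would not naturally integrate with the order-reduction framework of this paper.
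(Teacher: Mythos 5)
The paper does not actually prove this lemma: it is quoted directly from the literature (Kikuchi, Hiptmair, Arnold--Falk--Winther), so there is no in-paper argument to compare against. Your proof is a correct, essentially self-contained derivation, and it is the standard modern one: lift $\curl\uv{}_h$ to a $\uphi\in\undertilde{H}{}^1_0(\Omega)$ whose $H^1$-norm is controlled by $\|\curl\uv{}_h\|_{0,\Omega}$ alone (your detour through the Helmholtz component $\ueta{}_0\in N_0(\curl,\Omega)$ and the continuous Friedrichs inequality of Lemma \ref{lem:ficurl} is exactly what is needed to avoid the circularity of applying Lemma \ref{lem:curlsur} to $\uv{}_h$ directly), then pull $\uphi$ back into $\mathbb{N}^k_{h0}$ by a commuting projection and kill the residual discrete gradient using the defining orthogonality of $\mathbb{N}^k_{h0}(\curl)$. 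Two small remarks. First, $\curl$ maps $\mathbb{N}^k_{h0}$ onto the \emph{divergence-free} subspace of the companion $H_0(\dv)$-conforming space, not onto the whole space; this does not affect your argument, since all you need is that $\Pi_h^{\dv}$ is a projection fixing $\curl\uv{}_h$. Second, the regularity obstruction you flag for the canonical N\'ed\'elec interpolant is in fact harmless in this particular situation: $\uphi\in\undertilde{H}{}^1(\Omega)$ with $\curl\uphi=\curl\uv{}_h$ piecewise polynomial lies in the classical domain of the edge-moment interpolant (cf.\ Girault--Raviart, or Amrouche--Bernardi--Dauge--Girault), so one could avoid the smoothed projections of \cite{Hiptmair.R2002,AFW2006} altogether; invoking them is nonetheless a perfectly acceptable, and arguably cleaner, route. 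What your approach buys over the paper's bare citation is a proof that uses only ingredients already present in Section \ref{sec:mixbl} plus one quotable interpolation operator; what it costs is precisely that one quotation.
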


\begin{lemma}\label{lem:infsupdis}
For any integer $k$, there is a constant $C>0$, such that
\begin{equation}
\inf_{\undertilde{\eta}{}_h\in N^k_{h0}(\curl)\setminus\{\undertilde{0}\}}\sup_{\uphi{}_h\in (\mathfrak{L}^k_{h0})^3}\frac{(\curl\uphi{}_h,\curl\ueta{}_h)}{\|\uphi{}_h\|_{1,\Omega}\|\ueta{}_h\|_{\curl,\Omega}}\geqslant C.
\end{equation}
\end{lemma}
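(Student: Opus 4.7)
The plan is to follow the standard Fortin recipe. By the discrete Friedrichs inequality (Lemma \ref{lem:ficurldis}), $\|\ueta{}_h\|_{\curl,\Omega}\cequiv\|\curl\ueta{}_h\|_{0,\Omega}$ on $N^k_{h0}(\curl)$, so it suffices to exhibit, for each $\ueta{}_h\in N^k_{h0}(\curl)$, a test function $\uphi{}_h\in(\mathfrak{L}^k_{h0})^3$ with $(\curl\uphi{}_h,\curl\ueta{}_h)\geqslant C\|\curl\ueta{}_h\|_{0,\Omega}^{2}$ and $\|\uphi{}_h\|_{1,\Omega}\leqslant C\|\curl\ueta{}_h\|_{0,\Omega}$. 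Lemma \ref{lem:curlsur}, applied to $\ueta{}_h$, produces a continuous lift $\uphi\in\undertilde{H}{}^1_0(\Omega)$ with $\curl\uphi=\curl\ueta{}_h$ and $\|\uphi\|_{1,\Omega}\leqslant C\|\ueta{}_h\|_{\curl,\Omega}$; this gives $(\curl\uphi,\curl\ueta{}_h)=\|\curl\ueta{}_h\|_{0,\Omega}^{2}$, and the task reduces to discretizing $\uphi$ while preserving this pairing.

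To this end, I will construct a Fortin interpolant $\Pi_h\colon\undertilde{H}{}^1_0(\Omega)\to(\mathfrak{L}^k_{h0})^3$ satisfying (i) $\|\Pi_h\uphi\|_{1,\Omega}\leqslant C\|\uphi\|_{1,\Omega}$ and (ii) $(\curl(\Pi_h\uphi-\uphi),\curl\ueta{}_h)=0$ for every $\ueta{}_h\in\mathbb{N}^k_{h0}$; then $\uphi{}_h:=\Pi_h\uphi$ finishes the argument. The construction is in two stages. First, apply a Scott--Zhang type quasi-interpolant $\Pi^0_h\colon\undertilde{H}{}^1_0(\Omega)\to(L^k_{h0})^3$, which is $H^1$-stable and respects the homogeneous boundary trace. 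Second, correct $\Pi^0_h\uphi$ by adding, on each tetrahedron $K$, a vector function drawn from the bubble space $(BB_{k-2}(K))^3+\sum_{i=1}^{4}(BF^{i}_{k-1}(K))^3$, chosen so that the total error $\uphi-\Pi_h\uphi$ has vanishing face moments against $(P_{k-1}(F))^{3}$ on every face $F\subset\partial K$ and vanishing interior moments against $(P_{k-2}(K))^{3}$. Unisolvence of these moment conditions on the bubble space, and the required $H^1$-stability of the correction, are verified on the reference tetrahedron by a scaling and inverse-inequality argument.

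Property~(ii) then follows by element-wise integration by parts: since $\ueta{}_h\times\nu$ is single-valued across interior faces and vanishes on $\partial\Omega$, and since $\curl\curl\ueta{}_h|_{K}\in(P_{k-2}(K))^3$ while the tangential jumps of $\curl\ueta{}_h$ across interior faces lie in $(P_{k-1}(F))^3$, the pairing $(\curl(\uphi-\Pi_h\uphi),\curl\ueta{}_h)$ collapses to a sum of the moments just imposed, all of which vanish. The main obstacle is precisely this bookkeeping: one must check that the polynomial degrees built into $BF^{i}_{k-1}$ and $BB_{k-2}$ match exactly those dictated by the integration-by-parts identity uniformly for all $\ueta{}_h\in\mathbb{N}^{k}_{h0}$. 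Once this matching is established, the remaining ingredients, namely $H^1$-stability and $h$-uniformity of the inf-sup constant, follow from standard Scott--Zhang stability, inverse inequalities on the bubble correction, and the scaling invariance of the local construction.
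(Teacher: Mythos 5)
Your proposal is correct and follows essentially the same route as the paper: reduce to the continuous setting via the curl-lifting result (the paper invokes it through the inf-sup \eqref{eq:infsuphhdiv} over $\mathring{H}_0(\dv,\Omega)\cap(P_{k-1}(\mathcal{T}_h))^3$, you apply Lemma \ref{lem:curlsur} directly to $\ueta{}_h$), and then transfer to $(\mathfrak{L}^k_{h0})^3$ by a Fortin operator built from a stable quasi-interpolant corrected by the face and body bubbles of $\mathfrak{P}_k(K)$ so that face moments against $(P_{k-1}(F))^3$ and interior moments against $(P_{k-2}(K))^3$ vanish, which is exactly what the element-wise integration by parts against $\curl\ueta{}_h$ requires. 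The only cosmetic differences are Scott--Zhang in place of Cl\'ement and your more explicit attention to the unisolvence of the bubble correction, which the paper asserts without detail.
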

\begin{proof}
Evidently, $\curl \mathbb{N}^k_{h0}\subset \mathring H_0(\dv,\Omega)\cap (P_{k-1}(\mathcal{T}_h))^3$, thus it suffices for us to prove:
\begin{equation}
\inf_{\undertilde{q}{}_h\in\mathring H_0(\dv,\Omega)\cap (P_{k-1}(\mathcal{T}_h))^3\setminus\{\undertilde{0}\}}\sup_{\uphi{}_h\in (\mathfrak{L}^k_{h0})^3}\frac{(\curl\uphi{}_h,\uq{}_h)}{\|\uphi{}_h\|_{1,\Omega}\|\uq{}_h\|_{0,\Omega}}\geqslant C.
\end{equation}

Noting \eqref{eq:infsuphhdiv}, we try to construct a Frotin operator. Denote by $\Pi_1:H^1_0(\Omega)\to L_{h0}^1$ the Cl\'ement interpolant (\cite{Clement.P1975}), and define $\Pi_h: {H}{}^1_{0}(\Omega)\to \mathfrak{L}_{h0}^k$ such that
\begin{equation}
\left\{
\begin{array}{ll}
\displaystyle(\Pi_h v)(M)=(\Pi_1 v)(M), & \quad\forall\, M\in\mathcal{X}_h;
\\
\displaystyle\int_F(\Pi_h v)\gamma ds=\int_F v\gamma ds, &\quad\forall\,\gamma\in P_{k-1}(F)\ \mbox{and}\ \forall\,F\in\mathcal{F}_h;
\\
\displaystyle\int_K(\Pi_h v)\delta dx = \int_K v\delta dx, & \quad\forall\,\delta\in P_{k-2}(K)\ \mbox{and}\ \forall\,K\in \mathcal{G}_h.
\end{array}
\right.
\end{equation}
Note that $\mathfrak{L}^k_{h0}$ is $L^k_{h0}$ combined with body and face bubbles, and the operator $\Pi_h$ is well-defined. Then define $\undertilde{\Pi}{}_h:\undertilde{H}{}^1_0(\Omega)\to (\mathfrak{L}^k_{h0})^3$ by
$$
\undertilde{\Pi}{}_h\uv=(\Pi_h v_1,\Pi_h v_2,\Pi_h v_3)^\top,\ \ \mbox{for}\ \,\uv=(v_1,v_2,v_3)^\top.
$$
By standard technique, we obtain
$$
\sum_{K}h_K^{2r-2}|v-\Pi_h v|_{r,K}^2\leqslant C\|v\|_{1,\Omega}^2,\ \ r=0,1,
$$
and
$$
\sum_{K}h_K^{2r-2}|\uv-\undertilde{\Pi}{}_h \uv|_{r,K}^2\leqslant C\|\uv\|_{1,\Omega}^2,\ \ r=0,1.
$$
Now, as $\undertilde{q}{}_h\in(P_{k-1}(\mathcal{T}_h))^3$ and thus $\curl\,\undertilde{q}{}_h\in (P_{k-2}(\mathcal{T}_h))^3$, we have
\begin{multline}
\int_\Omega \curl(\uv-\undertilde{\Pi}{}_h\uv)\cdot \uq{}_h=\sum_{K\in \mathcal{G}_h}\int_K(\curl(\uv-\undertilde{\Pi}{}_h\uv)\cdot \uq{}_h)
\\
=\sum_{K\in\mathcal{G}_h}\left(\int_K(\uv-\undertilde{\Pi}{}_h\uv)\cdot\curl\uq{}_h+\sum_{F\subset\partial K}\int_F(\uv-\undertilde{\Pi}{}_h\uv)\times\mathbf{n}_F\cdot\uq{}_h\right)=0.
\end{multline}
Thus $\undertilde{\Pi}{}_h$ is a Fortin operator such that
\begin{equation}
\|\undertilde{\Pi}{}_h\uv\|_{1,\Omega}\leqslant C\|\uv\|_{1,\Omega},\ (\curl(\uv-\undertilde{\Pi}{}_h\uv),\uq{}_h)=0,\ \forall\,\uq{}_h\in (P_{k-1}(\mathcal{T}_h))^3,
\end{equation}
and this completes the proof.
\end{proof}

\subsection{Finite element discretization of \eqref{eq:probI-1}$\sim$\eqref{eq:probI-3}}

Associated with the formulation \eqref{eq:probI-1}$\sim$\eqref{eq:probI-3}, we propose a discretization scheme for \eqref{eq:primalbl} below.
\paragraph{Discretization \bf A}
\begin{enumerate}
\item find $r_h\in L^k_{h0}$, such that
\begin{equation}\label{eq:probI-1dis}
(\nabla r_h,\nabla v_h)=-\langle f_1,v_h\rangle,\forall\, v_h\in L^k_{h0};
\end{equation}
\item find $(\uphi{}_h,\uzeta{}_h,p_h)\in (\mathfrak{L}^m_{h0})^3\times \mathbb{N}^k_{h0}\times L^k_{h0}$, such that, for $(\upsi{}_h,\ueta{}_h,q_h)\in  (\mathfrak{L}^m_{h0})^3\times \mathbb{N}^k_{h0}\times L^k_{h0}$,
\begin{equation}\label{eq:probI-2dis}
\left\{
\begin{array}{ccclll}
(\alpha\dv\uphi{}_h,\dv\upsi{}_h)+(\curl\uphi{}_h,\curl\upsi{}_h)&&(\curl\uzeta{}_h,\curl\upsi{}_h)&=&\langle\uf{}_2,\upsi{}_h\rangle-(\nabla r_h,\upsi{}_h),&
\\
&&(\uzeta{}_h,\nabla q_h) &=&0,
\\
(\curl\uphi{}_h,\curl\ueta{}_h)&+(\ueta{}_h,\nabla p_h)&&=&0;&
\end{array}
\right.
\end{equation}
\item find $u_h\in L^m_{h0}$, such that
\begin{equation}\label{eq:probI-3dis}
(\nabla u_h,\nabla s_h)=(\uphi{}_h,\nabla s_h),\ \forall\,s_h\in L^m_{h0}.
\end{equation}
\end{enumerate}

By Lemma \ref{lem:infsupdis}, if $m\geqslant k$, all the three subproblems are well-posed. The estimate Lemma \ref{lem:absest} works. 

\subsubsection{An economical optimal finite element quintet on convex polyhedrons}
By Lemma \ref{lem:absest}, the rate of $\|u-u_h\|_{1,\Omega}$ can be comparable to $\|\uphi-\uphi{}_h\|_{0,\Omega}$, and $\|\uphi-\uphi{}_h\|_{1,\Omega}$ can be comparable to $\|r-r_h\|_{0,\Omega}$; this is coincident with the regularity estimate of Theorem \ref{thm:regconv} on convex domains. An economical optimal scheme is to set $k=1$ and $m=2$ in \eqref{eq:probI-1dis}$\sim$\eqref{eq:probI-3dis}.
\begin{lemma}
Let $\Omega$ be a convex polyhedron, and assume $\alpha$ is smooth on $\Omega$. Let $f_1\in L^2(\Omega)$ and $\uf{}_2\in \undertilde{L}^2(\Omega)$. Let $r\sim(\uphi,\uzeta,p)\sim u$ and $r_h\sim(\uphi{}_h,\uzeta{}_h,p_h)\sim u_h$ be the solutions of Decoupled formulation \textbf{A} and Discretization \textbf{A}, respectively. Then
\begin{enumerate}
\item $\|r-r_h\|_{1,\Omega}\leqslant Ch\|f_1\|_{0,\Omega}$ and $\|r-r_h\|_{0,\Omega}\leqslant Ch^2\|f_1\|_{0,\Omega}$;
\item $\|\uphi-\uphi{}_h\|_{1,\Omega}+\|\uzeta-\uzeta{}_h\|_{\curl,\Omega}+\|p-p_h\|_{1,\Omega}\leqslant Ch(\|f_1\|_{-1,\Omega}+\|\uf{}_2\|_{0,\Omega})$;
\item $\|\uphi-\uphi{}_h\|_{0,\Omega}\leqslant Ch^2(\|f_1\|_{0,\Omega}+\|\uf{}_2\|_{0,\Omega})$ and $\|u-u_h\|_{1,\Omega}\leqslant Ch^2(\|f_1\|_{0,\Omega}+\|\uf{}_2\|_{0,\Omega})$.
\end{enumerate}
\end{lemma}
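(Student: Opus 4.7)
The plan is to address the three bullets in order, relying on Lemma~\ref{lem:absest}, the regularity provided by Theorem~\ref{thm:regconv}, and the discrete inf--sup of Lemma~\ref{lem:infsupdis}. For the first bullet, \eqref{eq:probI-1dis} is the standard conforming Galerkin approximation of the Poisson problem for $r$ in the continuous piecewise linear space $L^1_{h0}$. Since $\Omega$ is a convex polyhedron, elliptic regularity gives $r\in H^2(\Omega)$ with $\|r\|_{2,\Omega}\lesssim \|f_1\|_{0,\Omega}$; then C\'ea's lemma plus standard interpolation yield $\|r-r_h\|_{1,\Omega}\lesssim h\|f_1\|_{0,\Omega}$, and an Aubin--Nitsche duality argument on the convex domain produces the $L^2$ estimate.

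For the second bullet, I would invoke Lemma~\ref{lem:absest}(3) with $R_{h,2}=L^1_{h0}$, $Y_h=(\mathfrak{L}^2_{h0})^3$, $\Sigma_h=\mathbb{N}^1_{h0}$; well-posedness of \eqref{eq:probI-2dis} is furnished by Lemma~\ref{lem:infsupdis} (since $m=2\ge k=1$). Theorem~\ref{thm:regconv} supplies $\uphi\in\undertilde{H}{}^2(\Omega)$, $\uzeta$ and $\curl\uzeta\in\undertilde{H}{}^1(\Omega)$, and $p=0$; by the abstract framework $p_h=0$ as well. Standard quadratic Lagrange approximation bounds the infimum involving $\uphi$, while the N\'ed\'elec interpolant (combined with the regular decomposition of $H_0(\curl,\Omega)$) bounds the infimum involving $\uzeta$, each with rate $h$. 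The residual term in $r-r_h$ is handled by integration by parts,
\[
\sup_{\upsi_h\in (\mathfrak{L}^2_{h0})^3}\frac{(\nabla(r-r_h),\upsi_h)}{\|\upsi_h\|_{1,\Omega}}=\sup_{\upsi_h}\frac{-(r-r_h,\dv\upsi_h)}{\|\upsi_h\|_{1,\Omega}}\le \|r-r_h\|_{0,\Omega},
\]
which is $O(h^2)$ by the first bullet and therefore absorbed into the announced $O(h)$ bound.

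For the third bullet, I would first establish the $L^2$ estimate for $\uphi-\uphi_h$ by an Aubin--Nitsche argument adapted to the saddle-point system \eqref{eq:probI-2}. Given $\undertilde{g}\in\undertilde{L}{}^2(\Omega)$, introduce the dual problem obtained from \eqref{eq:probI-2} by replacing $\uf{}_2$ by $\undertilde{g}$ and setting $\nabla r=\undertilde{0}$; by Theorem~\ref{thm:regconv} the dual solution $(\upsi^{\ast},\ueta^{\ast},p^{\ast})$ satisfies $\upsi^{\ast}\in\undertilde{H}{}^2$, $\ueta^{\ast}$ and $\curl\ueta^{\ast}\in\undertilde{H}{}^1$, $p^{\ast}=0$, each controlled by $C\|\undertilde{g}\|_{0,\Omega}$. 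Testing the error equations against $(\upsi^{\ast},\ueta^{\ast},0)$, inserting Lagrange and N\'ed\'elec interpolants of the dual solution into the discrete problem, and cancelling via Galerkin orthogonality, every cross term becomes a product of an $O(h)$ interpolation error against the $O(h)$ Galerkin errors of the second bullet, yielding $\|\uphi-\uphi_h\|_{0,\Omega}\lesssim h^2(\|f_1\|_{0,\Omega}+\|\uf{}_2\|_{0,\Omega})$. The estimate for $u-u_h$ then follows from Lemma~\ref{lem:absest}(4): the best-approximation term is $Ch^2|u|_{3,\Omega}$ with $u\in H^3$ from Theorem~\ref{thm:regconv}, and the data-residual $\sup_{s_h}(\uphi-\uphi_h,\nabla s_h)/\|s_h\|_{1,\Omega}\le \|\uphi-\uphi_h\|_{0,\Omega}$ is precisely what has just been proved.

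The main obstacle is the duality argument for $\|\uphi-\uphi_h\|_{0,\Omega}$: in a three-field saddle-point setting one must choose the right dual problem and bookkeep the coupled Galerkin orthogonalities carefully, in particular exploiting $p^{\ast}=0$ and the constraint $\dv\ueta^{\ast}=0$ coming from the dual analogue of the middle equation of \eqref{eq:probI-2}. All remaining pieces are essentially routine applications of Lagrange and N\'ed\'elec approximation theory combined with the regularity statements of Theorem~\ref{thm:regconv}.
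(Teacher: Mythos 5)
Your plan is correct, and for the decisive estimate on $\|\uphi-\uphi{}_h\|_{0,\Omega}$ it takes a genuinely different route from the paper. The paper does not run Aubin--Nitsche on the saddle-point subsystem \eqref{eq:probI-2} alone: it assembles all five fields into a single bilinear form $a_s$ on $H^1_0(\Omega)\times\undertilde{H}{}^1_0(\Omega)\times H^1_0(\Omega)\times H_0(\curl,\Omega)\times H^1_0(\Omega)$, observes that Discretization \textbf{A} with $m=2$, $k=1$ is then a Petrov--Galerkin method for $a_s$ (quadratic trial/linear test in the $u$-slot and the reverse in the $r$-slot), introduces an auxiliary all-lowest-order Galerkin solution $\hat u_h,\hat\uphi{}_h,\hat p_h,\hat\uzeta{}_h,\hat r_h$ whose components other than $\hat u_h$ coincide with those of Discretization \textbf{A}, and applies C\'ea plus a global duality argument to that symmetric auxiliary problem; the consistency term $(\nabla(r-r_h),\cdot)$ never surfaces there because $r$ is carried inside the monolithic form. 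You instead keep the three subproblems separate and must therefore track that consistency error explicitly, and your treatment of it is exactly what is needed: the dual regularity $\upsi{}^{\ast}\in\undertilde{H}{}^2(\Omega)$, $\ueta{}^{\ast},\curl\ueta{}^{\ast}\in\undertilde{H}{}^1(\Omega)$, $p^{\ast}=0$ is supplied by Theorem \ref{thm:regconv} with $f_1=0$, the symmetry of the three-field form lets you move the error into the test slot, and the residual splits as $(\nabla(r-r_h),\upsi{}^{\ast}_h-\upsi{}^{\ast})-(r-r_h,\dv\upsi{}^{\ast})$, both of which are $O(h^2)$ by your first bullet. Both routes give the stated bounds; the paper's monolithic form hides the bookkeeping at the price of the auxiliary symmetrized problem, while your subsystem-wise argument matches the decoupled spirit of the scheme and makes the role of $\|r-r_h\|_{0,\Omega}=O(h^2)$ visible. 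The remaining bullets are handled the same way in both proofs, via C\'ea, the Poisson duality argument, and Lemma \ref{lem:absest}.
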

\begin{proof}
We only have to estimate $\|\uphi-\uphi{}_h\|_{0,\Omega}$ and $\|u-u_h\|_{1,\Omega}$. The others follow from Lemma \ref{lem:absest} and Theorem \ref{thm:regconv} directly. 

Denote $\undertilde{V}:=H^{1}_{0}(\Omega)\times\undertilde{H}{}^1_{0}(\Omega)\times H^{1}_{0}(\Omega)\times H_{0}(\curl,\Omega)\times H^{1}_{0}(\Omega)$, and define $a_s(\cdot,\cdot)$ on $\undertilde{V}$ as
\begin{multline*}
a_s((u,\uphi,p,\uzeta,r),(v,\upsi,q,\ueta,s)):=
-(\nabla r,\nabla v)+(\alpha\dv\uphi,\dv\upsi)+(\curl\uphi,\curl\upsi)
\\
+(\curl\uzeta,\curl\upsi)
+(\nabla r,\upsi)+(\uzeta,\nabla q)+(\curl\uphi,\curl\ueta)+(\ueta,\nabla p)-(\nabla u,\nabla s)+(\uphi,\nabla s).
\end{multline*}
Decoupled formulation \textbf{A} is equivalent to, with same variables, finding $(u,\uphi,p,\uzeta,r)\in \undertilde{V}$, such that 
\begin{equation}
a_s((u,\uphi,p,\uzeta,r),(v,\upsi,q,\ueta,s))=\langle f_1,v\rangle+\langle \uf{}_2,\upsi\rangle,\ \forall\,(v,\upsi,q,\ueta,s)\in \undertilde{V}.
\end{equation}
Denote $\undertilde{V}{}_h:=L^2_{h0}\times (\mathfrak{L}_{h0}^1)^3\times L^1_{h0}\times \mathbb{N}_{h0}^1\times L^1_{h0}$ and $\undertilde{V}{}_h':=L^1_{h0}\times (\mathfrak{L}_{h0}^1)^3\times L^1_{h0}\times \mathbb{N}_{h0}^1\times L^2_{h0}$, and Discretization \textbf{A} with $m=2$ and $k=1$ is equivalent to finding $(u_h,\uphi{}_h,p_h,\uzeta{}_h,r_h)\in \undertilde{V}{}_h$, such that 
\begin{equation}
a_s((u_h,\uphi{}_h,p_h,\uzeta{}_h,r_h),(v_h,\upsi{}_h,q_h,\ueta{}_h,s_h))=\langle f_1,v_h\rangle+\langle \uf{}_2,\upsi{}_h\rangle,\ \forall\,(v_h,\upsi{}_h,q_h,\ueta{}_h,s_h)\in \undertilde{V}{}_h'.
\end{equation}
Now denote $\hat{\undertilde{V}}{}_h:=L^1_{h0}\times (\mathfrak{L}_{h0}^1)^3\times L^1_{h0}\times \mathbb{N}_{h0}^1\times L^1_{h0}$, and let $(\hat u_h,\hat\uphi{}_h,\hat p_h,\hat\uzeta{}_h,\hat r_h)\in \hat{\undertilde{V}}{}_h$ be such that
\begin{equation}
a_s((\hat u_h,\hat\uphi{}_h,\hat p_h,\hat\uzeta{}_h,\hat r_h),(\hat v_h,\hat\upsi{}_h,\hat q_h,\hat\ueta{}_h,\hat s_h))= \langle f_1,v_h\rangle+\langle \uf{}_2,\upsi{}_h\rangle \forall\,(\hat v_h,\hat\upsi{}_h,\hat q_h,\hat\ueta{}_h,\hat s_h)\in \hat{\undertilde{V}}{}_h.
\end{equation}
Then it follows that
$$
\hat r_h=r_h,\ \ \hat \uzeta{}_h=\uzeta{}_h,\ \ \hat p_h=p_h,\ \ \mbox{and}\ \ \hat\uphi{}_h=\uphi{}_h.
$$
By Ce\'a lemma,
\begin{multline*}
\|r-\hat{r}_h\|_{1,\Omega}+\|\uzeta-\hat\uzeta{}_h\|_{\curl,\Omega}+\|p-\hat p_h\|_{1,\Omega}+\|\uphi-\hat \uphi{}_h\|_{1,\Omega}+\|u-\hat{u}_h\|_{1,\Omega}
\\
\leqslant Ch(\|r\|_{2,\Omega}+\|\uzeta\|_{1,\Omega}+\|\curl\uzeta\|_{1,\Omega}+\|p\|_{2,\Omega}+\|\uphi\|_{2,\Omega}+\|u\|_{2,\Omega})\leqslant Ch(\|f_1\|_{0,\Omega}+\|\uf{}_2\|_{0,\Omega}).
\end{multline*}
Let $(\tilde u,\tilde\uphi,\tilde p,\tilde\uzeta,\tilde r)\in \undertilde{V}$ be the solution of
\begin{equation}\label{eq:auxeqn}
a_s((\tilde u,\tilde\uphi,\tilde p,\tilde\uzeta,\tilde r),( v,\upsi, q,\ueta, s))= (\uphi-\hat\uphi{}_h,\upsi),\ \forall\,( v,\upsi, q,\ueta, s)\in \undertilde{V}.
\end{equation}
Then it can be proved that $\tilde r=0$, $\tilde p=0$, and
$$
\|\tilde\uphi\|_{2,\Omega}+\|\tilde\uzeta\|_{1,\Omega}+\|\curl\tilde\uzeta\|_{1,\Omega}+\|\tilde u\|_{3,\Omega}\leqslant C\|\uphi-\hat\uphi{}_h\|_{0,\Omega}.
$$
By \eqref{eq:auxeqn},
$$
(\uphi-\hat\uphi{}_h,\uphi-\hat\uphi{}_h)=a_s((\tilde u,\tilde\uphi,\tilde p,\tilde\uzeta,\tilde r),( u-\hat u_h,\uphi-\hat\uphi{}_h, p-\hat p_h,\uzeta-\hat\uzeta{}_h, r-\hat r_h)),
$$
and further
$$
(\uphi-\hat\uphi{}_h,\uphi-\hat\uphi{}_h)=a_s((\tilde u-\tilde{v}_h,\tilde\uphi-\tilde\upsi{}_h,\tilde p-\tilde q_h,\tilde\uzeta-\tilde \ueta{}_h,\tilde r-\tilde s_h),( u-\hat u_h,\uphi-\hat\uphi{}_h, p-\hat p_h,\uzeta-\hat\uzeta{}_h, r-\hat r_h)),
$$
for any $(\tilde{v}_h,\tilde\upsi{}_h,\tilde q_h,\tilde \ueta{}_h,\tilde s_h)\in \hat{\undertilde{V}}{}_h$. Thus
\begin{multline*}
\|\uphi-\hat\uphi{}_h\|_{0,\Omega}^2\leqslant Ch^2(\|\tilde\uphi\|_{2,\Omega}+\|\tilde\uzeta\|_{1,\Omega}+\|\curl\tilde\uzeta\|_{1,\Omega}+\|\tilde u\|_{2,\Omega})
\\
\cdot(\|r\|_{2,\Omega}+\|\uzeta\|_{1,\Omega}+\|\curl\uzeta\|_{1,\Omega}+\|p\|_{2,\Omega}+\|\uphi\|_{2,\Omega}+\|u\|_{2,\Omega}),
\end{multline*}
which leads further to
$$
\|\uphi-\hat\uphi{}_h\|_{0,\Omega}\leqslant  Ch^2(\|f_1\|_{0,\Omega}+\|\uf{}_2\|_{0,\Omega}).
$$
By Lemma \ref{lem:absest}, this  leads to $\|u-u_h\|_{1,\Omega}\leqslant Ch^2(\|f_1\|_{0,\Omega}+\|\uf{}_2\|_{0,\Omega})$, and completes the proof.
\end{proof}

\subsection{Finite element discretization of \eqref{eq:probII-1}$\sim$\eqref{eq:probII-3}}

Associated with the formulation \eqref{eq:probII-1}$\sim$\eqref{eq:probII-3}, we propose a discretization scheme for \eqref{eq:primalbl} below.
\paragraph{Discretization \bf B}
\begin{enumerate}
\item find $r_h\in L^k_{h0}$, such that
\begin{equation}\label{eq:probII-1}
(\nabla r_h,\nabla v_h)=-\langle f_1,v_h\rangle,\forall\,v_h\in L^k_{h0};
\end{equation}
\item with $r_h$ solved out, find $(\uphi{}_h,\uzeta{}_h)\in (\mathfrak{L}^m_{h0})^3\times \mathbb{N}^k_{h0}$, such that, for $(\upsi{}_h,\ueta{}_h)\in (\mathfrak{L}^m_{h0})^3\times \mathbb{N}^k_{h0}$,
\begin{equation}\label{eq:probII-2}
\left\{
\begin{array}{ccclll}
(\alpha\dv\uphi{}_h,\dv\upsi{}_h)+(\curl\uphi{}_h,\curl\upsi{}_h)&&(\curl\uzeta{}_h,\curl\upsi{}_h)&=&\langle\uf{}_2,\upsi{}_h\rangle-(\nabla r_h,\upsi{}_h),&
\\
(\curl\uphi{}_h,\curl\ueta{}_h)&&&=&0;&
\end{array}
\right.
\end{equation}
\item with $\uphi{}_h$ solved out, find $u_h\in L^m_{h0}$, such that
\begin{equation}\label{eq:probII-3}
(\nabla u_h,\nabla s_h)=(\uphi{}_h,\nabla s_h),\ \forall\,s_h\in L^m_{h0}.
\end{equation}
\end{enumerate}

The lemma below is immediate.
\begin{lemma}
Set $m\geqslant k$ for Discretization \textbf{B}. Given $f_1\in H^{-1}(\Omega)$ and $\uf{}_2\in \undertilde{H}{}^{-1}(\Omega)$, let $r_h\sim(\uphi{}_h,\xi{}_h,p_h)\sim u_h$ be the solution of Discretisation \textbf{A}, then $r_h\sim(\uphi{}_h,\xi{}_h)\sim u_h$ is a solution of Discretisation \textbf{B}, and if $\tilde r_h\sim (\tilde\uphi{}_h,\tilde \xi_h)\sim \tilde u_h$ is another solution of Discretisation \textbf{B}, then $\tilde r_h=r_h$, $\tilde \uphi{}_h=\uphi{}_h$ and $\tilde u_h=u_h$.
\end{lemma}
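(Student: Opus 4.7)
The plan is to proceed in two directions: first show that a solution of Discretization \textbf{A} automatically induces a solution of Discretization \textbf{B} (after dropping $p_h$), and then establish uniqueness of $r_h$, $\uphi_h$ and $u_h$ in Discretization \textbf{B}, noting that $\uzeta_h$ genuinely need not be unique.

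\textbf{Step 1 (existence).} The key preparatory observation is the discrete analogue of Lemma \ref{lem:absest}(2): that $p_h=0$ for any solution of Discretization \textbf{A}. To see it I would test the third equation of the middle block of Discretization \textbf{A} with $\ueta_h=\nabla p_h$, which lies in $\mathbb{N}^k_{h0}$ because $p_h\in L^k_{h0}$ (this is a standard piece of the finite-element de Rham sequence). Since $\curl\nabla p_h=\undertilde{0}$, the identity collapses to $\|\nabla p_h\|_{0,\Omega}^2=0$, and as $p_h\in H^1_0(\Omega)$ this forces $p_h=0$. With $p_h=0$ in hand, the third equation of the middle block of \textbf{A} becomes $(\curl\uphi_h,\curl\ueta_h)=0$ for all $\ueta_h\in\mathbb{N}^k_{h0}$, which is precisely the second equation of the middle block of \textbf{B}. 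The first subproblem (for $r_h$) and the third subproblem (for $u_h$) are literally identical in \textbf{A} and \textbf{B}, and the first equation of the middle subproblem coincides in both. The only equation of \textbf{A} not tested in \textbf{B} is the gauge condition $(\uzeta_h,\nabla q_h)=0$, and this places no obligation on a \textbf{B}-solution. Hence the triple extracted from Discretization \textbf{A} solves Discretization \textbf{B}.

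\textbf{Step 2 (uniqueness of $r_h$, $\uphi_h$, $u_h$).} Let $\tilde r_h\sim(\tilde\uphi_h,\tilde\uzeta_h)\sim\tilde u_h$ be any other solution of Discretization \textbf{B}. The first subproblem is a well-posed discrete Poisson problem identical in \textbf{A} and \textbf{B}, so $\tilde r_h=r_h$. The main task is to prove $\tilde\uphi_h=\uphi_h$. Setting $\delta\uphi:=\tilde\uphi_h-\uphi_h\in(\mathfrak{L}^m_{h0})^3$ and $\delta\uzeta:=\tilde\uzeta_h-\uzeta_h\in\mathbb{N}^k_{h0}$, subtraction gives
\[
(\alpha\dv\delta\uphi,\dv\upsi_h)+(\curl\delta\uphi,\curl\upsi_h)+(\curl\delta\uzeta,\curl\upsi_h)=0,\quad\forall\,\upsi_h\in(\mathfrak{L}^m_{h0})^3,
\]
together with $(\curl\delta\uphi,\curl\ueta_h)=0$ for all $\ueta_h\in\mathbb{N}^k_{h0}$. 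Choosing $\upsi_h=\delta\uphi$ in the first identity and $\ueta_h=\delta\uzeta$ in the second eliminates the cross term and yields
\[
(\alpha\dv\delta\uphi,\dv\delta\uphi)+\|\curl\delta\uphi\|_{0,\Omega}^2=0.
\]
Since $\alpha\geqslant\alpha_s>0$ and since for $\uv\in\undertilde{H}{}^1_0(\Omega)$ one has the classical integration-by-parts identity $\|\nabla\uv\|_{0,\Omega}^2=\|\dv\uv\|_{0,\Omega}^2+\|\curl\uv\|_{0,\Omega}^2$, I conclude $\nabla\delta\uphi=\undertilde{0}$, whence $\delta\uphi=\undertilde{0}$ by the homogeneous boundary data. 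Finally $\tilde u_h=u_h$ follows from the well-posedness of the third subproblem, whose right-hand side now coincides for both solutions.

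Non-uniqueness of $\uzeta_h$ is both expected and consistent with the statement: the gauge $(\uzeta_h,\nabla q_h)=0$ enforced in \textbf{A} is absent from \textbf{B}, so $\tilde\uzeta_h-\uzeta_h$ may differ by any element of $\nabla L^k_{h0}$, which automatically satisfies $(\curl\cdot,\curl\cdot)=0$. I do not anticipate a serious obstacle; the only mildly technical ingredients are the containment $\nabla L^k_{h0}\subset\mathbb{N}^k_{h0}$ from the discrete de Rham complex and the identity relating $\|\nabla\cdot\|$, $\|\dv\cdot\|$ and $\|\curl\cdot\|$ on $\undertilde{H}{}^1_0(\Omega)$, both standard.
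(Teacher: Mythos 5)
Your proof is correct and is essentially the argument the paper intends when it declares the lemma ``immediate'': the identity $p_h=0$ obtained by testing with $\nabla p_h\in\mathbb{N}^k_{h0}$ is exactly the discrete counterpart of Lemma \ref{lem:absest}(2), and the uniqueness of $\uphi{}_h$ via the energy identity $(\alpha\dv\delta\uphi,\dv\delta\uphi)+\|\curl\delta\uphi\|_{0,\Omega}^2=0$ together with $\|\nabla\uv\|_{0,\Omega}^2=\|\dv\uv\|_{0,\Omega}^2+\|\curl\uv\|_{0,\Omega}^2$ on $\undertilde{H}{}^1_0(\Omega)$ is the standard route. No gaps.
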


\begin{remark}
In practice, Discretisation \textbf{B} is an ill-posed system with smaller size, but $u_h$ and $\uphi{}_h$ which are more concerned can be approximated well with the scheme. 
\end{remark}

\section{Concluding remarks}
\label{sec:con}

In this paper, the construction of decoupled mixed element scheme for fourth order problems is studied, and a general process is designed in an intrinsic way for problems of certain types. Once some mild conditions are verified, a problem on high-regularity space can be decoupled to subproblems on low-regularity space. This process may bring in bigger flexibility on designing stable discretisation schemes; the schemes can be implemented by various finite element packages. In this paper, the three dimensional biLaplacian equation with constant and variable coefficient are studied under the general framework. In the future, the application of the methodology  for other problems in applications in, e.g., linear elasticity and acoustics, and other fields, will be discussed. The framework itself can be generalised to more complicated situations.

Due to the equivalence between the primal and decoupled formulations, it could be natural to expect equivalence between some discretised problem in order reduced formulation and some discretised problem in primal formulation (c.f. \cite{Arnold.D;Scott.L;Vogelius.M1988,Arnold.D;Brezzi.F1985,Scott.L;Vogelius.M1985}). Also, their cooperation can be of interests and be discussed in future. Finally we remark that, the framework admits utilisation of ill-posed subproblems; this will bring convenience in designing and implementing numerical schemes and will be discussed more in future. The fast solution of such problems will also be studied.



\end{document}